\newtheorem{theorem}{Theorem}
\theoremstyle{plain}
\newtheorem{conjecture}{Conjecture}
\newtheorem{corollary}{Corollary}
\newtheorem{lemma}{Lemma}
\newtheorem{remark}{Remark}
\numberwithin{equation}{section}
\begin{document}
\title[A Class of Analytic
Functions]{A Class of Analytic
Functions associated with Sine Hyperbolic Functions}
\author[S.S. Kumar, M. G. Khan, B. Ahmad, M. K. Mashwani]{S. Sivaprasad Kumar$%
^{1,\ast },$ Muhammad Ghaffar Khan$^{2}$, Bakhtiar Ahmad$^{3},$ Wali Khan
Mashwani$^{2}$}
\address{$^{1,\ast }$Department of Applied Mathematics, Delhi College of
Engineering Bawana Road, Badli Delhi-110042, India}
\email{spkumar@dce.ac.in}
\address{$^{2}$Institute of Numerical Sciences, Kohat University of Science
and Technology, Kohat 26000, KPK, Pakistan}
\email{ghaffarkhan020@gmail.com}
\address{$^{3}$Department of Mathematics, Govt Degree College Mardan,
Pakistan}
\email{pirbakhtiarbacha@gmail.com}
\keywords{Analytic functions, Sine hyperbolic function, Subordination,
Convolution, Third Hankel determinant. }
\date{\\
\indent$^{* }$ Corresponding author\\
2010\textit{\ Mathematics Subject Classification. }30C45, 30D30.}

\begin{abstract}
We introduce a class of analytic functions subordinate to
the function $1+\sinh \left( z\right) $ and obtain various
necessary and sufficient conditions for functions to be in the class.
These conditions mainly comprise of the coefficient inequalities involving convolution. Further, we have obtained sharp five initial
coefficients,  a conjecture for the
general nth coefficient and the third Hankel determinant bounds for the functions in this class. Also derived certain differential subordination implication results involving $1+\sinh \left( z\right)$.
\end{abstract}

\maketitle

\section{Introduction and Definitions}

\noindent Let $\mathcal{A}$ be the class of all analytic functions $f\left(
z\right) $ defined in the open unit disc $\mathbb{D}=\left \{ z\in \mathbb{C}:\left
\vert z\right \vert <1\right \} $ with the power series representation as
\begin{equation}
f(z)=z+\sum \limits_{n=2}^{\infty }a_{n}z^{n}.
\label{eq1}
\end{equation}
Further, $\mathcal{S}$ denote the class of functions $f \in
\mathcal{A}$ that are univalent in $\mathbb{D}$. The function $f_{1}(z)$ is subordinate to $f_{2}(z)$, symbolically
written as $f_{1}(z)\prec f_{2}(z)$, if there exists a Schwarz function $\omega (z)$, $|\omega (z)|\leq |z|$, such that
$f_{1}(z)=f_{2}\left( \omega (z)\right) ,\; (z\in \mathbb{D}).$
Furthermore, if the function $f_{2}$ belongs to class $\mathcal{S}$, then
we have following equivalence condition
$f_{1}(z)\prec f_{2}(z),\; \left( z\in \mathbb{D}\right)$
if and only if $f\left( \mathbb{D}\right) \subseteq g\left( \mathbb{D}%
\right)$ and $f\left( 0\right) =g\left( 0\right)$.
For function $f$ of the form $\left( \ref{eq1}\right) $ and $g$ given by
\begin{equation*}
g\left( z\right) =z+\sum \limits_{n=2}^{\infty }b_{n}z^{n},
\end{equation*}
the Hadamard product or convolution of $f$ and $g$ is defined by
\begin{equation*}
\left( f\ast g\right) \left( z\right) =z+\sum \limits_{n=2}^{\infty
}a_{n}b_{n}z^{n}.
\end{equation*}
Recall that $f(z)=f(z)\ast\dfrac{z}{1-z}$ and  $zf'(z)=f(z)\ast\dfrac{z}{(1-z)^2}$.
Let $\mathcal{P}$ be the class of analytic functions $k(z)$ with
positive real part in $\mathbb{D}$ with the normalization
\begin{equation}
k(z)=1+\sum_{n=1}^{\infty }c_{n}z^{n}.  \label{p1}
\end{equation}
\noindent In 1992, Ma and Minda \cite{Ma} introduced and studied the following subclass of starlike functions in
$\mathcal{A}$:%
\begin{equation}
\mathcal{S}^{\ast }\left( h\right) =\left \{ f\in \mathcal{A}:\frac{%
zf^{\prime }(z)}{f(z)}\prec h(z)\prec\frac{1+z}{1-z},\text{ }z\in \mathbb{D}%
\right \} ,  \label{m3}
\end{equation}%
where $h$ has positive real part, $h(\mathbb{D})$ symmetric about the real axis with $h'(0)>0$ and $h(0)=1$.
Now by changing the function on the right hand side of $\left( \ref{m3}%
\right) $, we obtain several subclasses of the class $\mathcal{S}$, which
were introduced and investigated earlier, for example if we set $h\left(
z\right) =(1+Az)/(1+Bz),$ where $-1\leq B<A\leq 1,$ we obtain Janowski
class $\mathcal{S}^{\ast }\left[ A,B\right] ,$ see \cite{jan}. If $h\left(
z\right) =1+\sin \left( z\right) $, we obtain the class $\mathcal{S}_{\sin
}^{\ast }$ introduced by Cho et al. \cite{ch} and also see \cite{arifs}. By
setting \ $h(z)=\sqrt{1+z}$ we get the class $\mathcal{S}_{L}^{\ast },$ which was
introduced and studied by Sok\'{o}\l{} and Stankiewicz \cite{8.1} and further
studied by authors in \cite{8.2}. By varying $h\left( z\right), $
following classes are obtained:

\begin{enumerate}
\item If \ $h(z)=\cosh (z),$ Alotaibi et al. \cite{ar} introduced and
discussed class $\mathcal{S}_{\cosh }^{\ast }=\mathcal{S}^{\ast }\left(
\cosh \left( z\right) \right) $.

\item If \ $h(z)=1+\frac{4}{3}z+\frac{2}{3}z^{2},$ the class $\mathcal{S}%
_{Car}^{\ast }=\mathcal{S}^{\ast }\left( 1+\frac{4}{3}z+\frac{2}{3}%
z^{2}\right) $ associated with cardioid introduced by Sharma et al. \cite{9}.

\item If \ $h(z)=e^{z},$ the class $\mathcal{S}_{e}^{\ast }=\mathcal{S}%
^{\ast }\left( e^{z}\right) $ was introduced and studied by Mendiratta et
al. \cite{11} and further investigated by Shi et al. \cite{12}.

\item If \ $h(z)=z+\sqrt{1+z^{2}},$ Raina and Sokol et al. \cite{1}
introduced and discussed the class $\mathcal{S}_{\ell }^{\ast }=\mathcal{S}%
^{\ast }\left( z+\sqrt{1+z^{2}}\right) $ .

\item If \ $h(z)=\frac{2}{1+e^{-z}},$ recently the class was introduced and
discussed by Goel and Kumar \cite{sig}.

\item If \ $h(z)=1+z-\frac{1}{3}z^{3},$ more recently Wani and Swaminathan
introduced the class \cite{nep}.
\end{enumerate}

\noindent Also several subclasses of starlike functions were recently
introduced in \cite{14, 15, 16, 17, 18} by choosing a particular function
$h(z)$ such as functions associated with Bell numbers, functions associated
with shell-like curve connected with Fibonacci numbers or functions
connected with the conic domains.

 Kumar and Gangania~\cite{ganga} consider the analytic univalent function $\psi$ in $\mathbb{D}$ such that $\psi(0)=0$, $\psi(\mathbb{D})$ is
 starlike with respect to $0$ and introduced the following class of analytic functions:
\begin{equation}\label{gen-ma-min}
\mathcal{F}(\psi):= \left\{f\in \mathcal{A}: \frac{zf'(z)}{f(z)}-1 \prec \psi(z),\; \psi(0)=0 \right\}.
\end{equation}
Note that when $1+\psi(z)\not \prec (1+z)/(1-z)$, then the functions in the class $\mathcal{F}(\psi)$ may not be univalent in $\mathbb{D}$
which also implies $\mathcal{F}(\psi)\not\subseteq \mathcal{S}^{*}$ in general. Thus in case, when the function $1+\psi:=h$ has positive real
part, $h(\mathbb{D})$ symmetric about the real axis with $h'(0)>0$, then $\mathcal{F}(\psi)$ reduces to the class $\mathcal{S}^{*}(h)$. With
the condition that maximum and minimum of the real part of $\psi(z)$ is given by $\psi(\pm r)$, where $r=|z|$,  they established growth
theorem and obtained the sharp upper bound for distortion theorem for the class $\mathcal{F}(\psi)$. Hence improved the results which was
known for $0\leq \alpha \leq 3-2\sqrt{2}$ and $0\leq \beta \leq1/2$ for the following classes:
\begin{equation*}\label{boothlem}
\mathcal{BS}(\alpha):= \biggl\{f\in \mathcal{A} : \frac{zf'(z)}{f(z)}-1 \prec \frac{z}{1-\alpha z^2},\; \alpha\in [0,1) \biggl\},
\end{equation*}
where  $z/(1-\alpha z^2)=:\psi(z)$ is an analytic univalent function (known as Booth Lemniscate function) and symmetric with respect to the
real and imaginary axes and
\begin{equation*}\label{cissoidclass}
\mathcal{S}_{cs}(\beta):= \biggl\{f\in \mathcal{A} : \left(\frac{zf'(z)}{f(z)}-1\right) \prec \frac{z}{(1-z)(1+\beta z)},\; \beta\in [0,1)
\biggl\},
\end{equation*}
where $\frac{z}{(1-z)(1+\beta z)} := \psi(z)$ is univalent, analytic, symmetric about the real-axis and
maps the unit disk $\mathbb{D}$ onto the domain bounded by {\it Cissoid of Diocles}:
\begin{equation*}
CS(\beta):=\left\{ w=u+iv\in \mathbb{C}: \left(u-\frac{1}{2(\beta-1)}\right)(u^2+v^2)+ \frac{2\beta}{(1+\beta)^2(\beta-1)}v^2=0 \right\}
\end{equation*}
studied in \cite{kargar-2019}, \cite{NNEbadian-2018} and \cite{Masih-2019}.

 Motivated from the above, we introduce the subclass $\mathcal{G}_{sh
}$ of $\mathcal{F}(\psi)$ connected with a sine hyperbolic
function as:
\begin{equation*}
\mathcal{G}_{sh}:= \left\{f\in \mathcal{A}: \frac{zf'(z)}{f(z)}-1 \prec \sinh (z) \right\}.
\end{equation*}
Let $\phi(z):=1+\sinh(z)$. Note that $\phi \left( z\right)$ is not a Carath\'eodery function as ${\Re}\left( \phi \left( z\right) \right) \ngtr 0$ $%
\forall $ $z\in \mathbb{D}.$  A
function $f\in \mathcal{G}_{sh}$ if and only if there exists
an analytic function $q$ satisfying $q\left( z\right) \prec \phi \left(
z\right) $ such that
\begin{equation*}
f\left( z\right) =z\exp \left( \int_{0}^{z}\frac{q\left( t\right) -1}{t}dt\right) .
\end{equation*}
Thus choosing $q(z)=\phi(z)$, we have the following function
\begin{equation}\label{extremalform}
    f_0(z)= z\exp \left( \int_{0}^{z}\frac{\sinh(t)}{t}dt \right).
\end{equation}
As a consequence of \cite[Theorem~2.1, Corollary~2.1,~2.2, pg~3]{ganga}, we have the sharp results for the class $\mathcal{G}_{sh}$:
\begin{theorem}
Let $f\in \mathcal{G}_{sh}$ and $f_0$ be given as in \eqref{extremalform}. Then for $|z|=r$, we have
\begin{enumerate}
    \item $($growth theorem$)$ $-f_0(-r) \leq |f(z)| \leq f_0(r)$.
    \item $($covering theorem$)$ either $f$ is a rotation of $f_0$ or
		$$\{w\in{\mathbb{C}} : |w|\leq-{f}_0(-1) \} \subset f({\mathbb{D}}),$$
	where $-{f}_0(-1)=\lim_{r\rightarrow 1}(-f_0(-r)).$
	\item $\Re \dfrac{f(z)}{z} \leq \dfrac{f_0(r)}{r}$ and $|f'(z)|\leq \dfrac{(1+\sinh{r})f_0(r)}{r}$.
\end{enumerate}
\end{theorem}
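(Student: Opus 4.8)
The plan is to recognise that $\mathcal{G}_{sh}$ is exactly the Kumar--Gangania class $\mathcal{F}(\psi)$ of \eqref{gen-ma-min} with $\psi(z)=\sinh(z)$, and then to verify that $\psi=\sinh$ satisfies all the structural hypotheses under which \cite[Theorem~2.1, Corollary~2.1,~2.2]{ganga} deliver sharp growth, covering, and derivative estimates. Once those hypotheses are confirmed, each of the three conclusions is read off directly from the cited results, the extremal function being precisely $f_0$ of \eqref{extremalform}, which arises by choosing $q(z)=\phi(z)=1+\sinh(z)$.

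First I would check the admissibility of $\psi(z)=\sinh(z)$: that it is analytic and univalent on $\mathbb{D}$, that $\psi(0)=0$, and that $\psi(\mathbb{D})=\sinh(\mathbb{D})$ is starlike with respect to the origin. Univalence follows because $\sinh(z_1)=\sinh(z_2)$ forces $z_1=z_2$ or $z_1=\pi i-z_2$ modulo $2\pi i$, and the second alternative is impossible inside $\mathbb{D}$ since its diameter is $2<\pi$. Starlikeness reduces to the routine verification that $\Re\bigl(z\coth z\bigr)=\Re\frac{z\psi'(z)}{\psi(z)}>0$ for $z\in\mathbb{D}$, and $\psi(0)=\sinh(0)=0$ is immediate.

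The decisive step, which I expect to be the main obstacle, is verifying the symmetry/extremality hypothesis of \cite{ganga}: that the maximum and minimum of $\Re\,\psi(z)$ over $|z|=r$ are attained at the real points $z=\pm r$, with values $\psi(r)=\sinh r$ and $\psi(-r)=-\sinh r$. Writing $z=x+iy$ with $x^2+y^2=r^2<1$, one has $\Re\sinh(z)=\sinh(x)\cos(y)$. Since $|y|\le r<1<\pi/2$ gives $\cos y>0$, the product is bounded above by $\sinh(x)\le\sinh(r)$ when $x\ge0$ (using $\cos y\le 1$ and monotonicity of $\sinh$), and is nonpositive when $x<0$; hence $\max_{|z|=r}\Re\sinh z=\sinh r$, attained at $z=r$. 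The odd symmetry $\sinh(-z)=-\sinh(z)$ then yields $\min_{|z|=r}\Re\sinh z=-\sinh r$ at $z=-r$, so that $\Re\phi$ ranges over $[\,1-\sinh r,\,1+\sinh r\,]$ on $|z|=r$.

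With these facts established, I would invoke the integral representation $\log\frac{f(z)}{z}=\int_0^z \frac{q(t)-1}{t}\,dt$ valid for $f\in\mathcal{G}_{sh}$ with $q\prec\phi$, exactly as in the derivation of \eqref{extremalform}. The growth bound (1) follows by comparing $\log|f(z)/z|$ with the extremal value produced by $q=\phi$ along the real diameter, giving $-f_0(-r)\le|f(z)|\le f_0(r)$; the covering statement (2) is the limiting case $r\to1$ of the lower growth bound together with the usual Ma--Minda rotation dichotomy; and the estimates in (3) come from \cite[Corollary~2.2]{ganga}, the bound on $|f'(z)|$ using $|f'(z)|=|q(z)|\,|f(z)|/|z|$ with $|f(z)|/|z|\le f_0(r)/r$ and $|q(z)|\le\max_{|w|=r}|\phi(w)|=1+\sinh r$. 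In every case sharpness is witnessed by $f_0$ and its rotations.
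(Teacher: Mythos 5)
Your proposal is correct and follows essentially the same route as the paper, which simply deduces the theorem as a consequence of \cite[Theorem~2.1, Corollary~2.1,~2.2]{ganga} applied to $\mathcal{F}(\psi)$ with $\psi(z)=\sinh(z)$ and extremal function $f_0$. In fact you go further than the paper by explicitly verifying the hypotheses it leaves implicit (univalence and starlikeness of $\sinh(\mathbb{D})$, and that $\max$ and $\min$ of $\Re\sinh(z)$ on $|z|=r$ occur at $z=\pm r$), and those verifications are sound.
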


In this paper, we consider some important properties like convolution
problems, necessary and sufficient conditions, coefficient problems, convex
combination, upper bounds for coefficients, Fekete-szeg\"{o} problems and third
Hankel determinant for the class $\mathcal{G}_{sh}$.

Let $f\in \mathcal{A}$, then qth Hankel determinant of $f$ is defined for
$q\geq 1,$ and $n\geq 1$ by
\begin{equation}
H_{q,n}\left( f\right) =\left \vert
\begin{array}{llll}
a_{n} & a_{n+1} & \ldots & a_{n+q-1} \\
a_{n+1} & a_{n+2} & \ldots & a_{n+q} \\
\vdots & \vdots & \ldots & \vdots \\
a_{n+q-1} & a_{n+q} & \ldots & a_{n+2q-2}
\end{array}
\right \vert .
\end{equation}
Thus second and third Hankel determinants are respectively:
\begin{align*}
&H_{2,2}\left( f\right) =a_{2}a_{4}-a_{3}^{2}, \\
&H_{3,1}\left( f\right) =a_{3}\left( a_{2}a_{4}-a_{3}^{2}\right)
-a_{4}\left( a_{4}-a_{2}a_{3}\right) +a_{5}\left( a_{3}-a_{2}^{2}\right) .
\end{align*}

\section{Preliminary}

%----------------------------------------------------------------------

\noindent The following lemmas are important for proving our
results.

\begin{lemma}
\label{L1}\cite{Ma}. If $k\in \mathcal{P}$ and it is of the form $\left( %
\ref{p1}\right) $, then for $\lambda \in \mathbb{C}$%
\begin{equation}
\left \vert c_{n}\right \vert \leq 2\text{ for }n\geq 1,  \label{1}
\end{equation}%
and%
\begin{equation}
\left \vert c_{2}-\lambda c_{1}^{2}\right \vert \leq 2\text{ }\max \left \{
1;\left \vert 2\lambda -1\right \vert \right \} \text{.}  \label{lc}
\end{equation}
\end{lemma}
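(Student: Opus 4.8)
The plan is to deduce both estimates from the elementary coefficient theory of Schwarz functions. Since $k\in\mathcal{P}$ with $k(0)=1$, the function
\[
\omega(z)=\frac{k(z)-1}{k(z)+1}
\]
is analytic on $\mathbb{D}$ with $\omega(0)=0$ and $|\omega(z)|<1$, i.e.\ a Schwarz function; write $\omega(z)=w_{1}z+w_{2}z^{2}+\cdots$. Inverting, $k(z)=(1+\omega(z))/(1-\omega(z))=1+2\omega(z)+2\omega(z)^{2}+\cdots$, so that
\[
c_{1}=2w_{1},\qquad c_{2}=2w_{2}+2w_{1}^{2}.
\]
The whole argument then rests on the two Schwarz-lemma bounds $|w_{1}|\le 1$ and $|w_{2}|\le 1-|w_{1}|^{2}$.

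For the first inequality, I would obtain $|c_{n}|\le 2$ from the Herglotz representation of Carath\'eodory functions: every $k\in\mathcal{P}$ can be written as $k(z)=\int_{0}^{2\pi}\frac{1+e^{-it}z}{1-e^{-it}z}\,d\mu(t)$ for some probability measure $\mu$ on $[0,2\pi]$, whence $c_{n}=2\int_{0}^{2\pi}e^{-int}\,d\mu(t)$ and $|c_{n}|\le 2\int d\mu=2$. Equality forces $\mu$ to be a point mass, i.e.\ $k$ a rotation of $(1+z)/(1-z)$.

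For the Fekete--Szeg\"{o} functional, substituting the relations above gives
\[
c_{2}-\lambda c_{1}^{2}=2w_{2}+2(1-2\lambda)w_{1}^{2}.
\]
Applying the triangle inequality together with $|w_{2}|\le 1-|w_{1}|^{2}$, and writing $t=|w_{1}|\in[0,1]$, yields
\[
|c_{2}-\lambda c_{1}^{2}|\le 2+2\bigl(|2\lambda-1|-1\bigr)t^{2}.
\]
The right-hand side is linear in $t^{2}$, so its maximum over $[0,1]$ is attained at $t=0$ when $|2\lambda-1|\le 1$ (giving the value $2$) and at $t=1$ when $|2\lambda-1|>1$ (giving $2|2\lambda-1|$); in both cases the bound is $2\max\{1,|2\lambda-1|\}$, as claimed.

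The final one-variable optimization is routine. The one genuinely technical step is the inequality $|w_{2}|\le 1-|w_{1}|^{2}$, which I expect to be the main obstacle. I would prove it by applying the Schwarz lemma to the M\"obius-renormalized function $\phi(z)=\bigl(\omega(z)/z-w_{1}\bigr)/\bigl(1-\overline{w_{1}}\,\omega(z)/z\bigr)$: since $\omega(z)/z$ maps $\mathbb{D}$ into $\overline{\mathbb{D}}$ with value $w_{1}$ at the origin, $\phi$ is again a Schwarz function, and a short expansion gives $\phi'(0)=w_{2}/(1-|w_{1}|^{2})$, so that $|\phi'(0)|\le 1$ is exactly the desired estimate (the degenerate case $|w_{1}|=1$ being handled trivially since then $w_{2}=0$).
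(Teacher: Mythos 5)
Your proof is correct and complete: the reduction to a Schwarz function via $\omega=(k-1)/(k+1)$, the Herglotz representation for $|c_n|\le 2$, and the Schwarz--Pick estimate $|w_2|\le 1-|w_1|^2$ (which you correctly identify as the only nontrivial ingredient and prove properly, including the degenerate case) together give exactly the stated bounds, and this is the standard argument for this classical lemma. The paper itself offers no proof to compare against -- it simply cites Ma and Minda -- so there is nothing further to reconcile.
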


\begin{lemma}
\label{L2}\cite{Ma}. If $k\in \mathcal{P}$ and is represented by $\left( %
\ref{p1}\right) $, then
\begin{equation*}
\left \vert c_{2}-\nu c_{1}^{2}\right \vert \leq \left \{
\begin{array}{ll}
-4\nu +2\quad & (\nu \leq 0), \\
2\quad & (0\leq \nu \leq 1), \\
4\nu -2\quad & (\nu \geq 1).%
\end{array}%
\right.
\end{equation*}
\end{lemma}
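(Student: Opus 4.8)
The plan is to combine the standard parametrization of Carathéodory coefficients with a rotation normalization, after which the assertion collapses to an elementary maximization of a quadratic in $|c_1|$ over $[0,2]$; in fact the statement is nothing more than the real-parameter unfolding of the complex bound already recorded in Lemma~\ref{L1}, and I would flag this as a sanity check.

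I would begin from the well-known representation that for $k\in\mathcal{P}$ of the form \eqref{p1} there is an $x$ with $|x|\le1$ satisfying $2c_2=c_1^2+(4-c_1^2)x$ (the usual consequence of the Carath\'eodory--Toeplitz conditions). Next I observe that $|c_2-\nu c_1^2|$ is unchanged under the rotation $k(z)\mapsto k(\varepsilon z)$ with $|\varepsilon|=1$: since this sends $c_n\mapsto\varepsilon^n c_n$, the functional transforms as $c_2-\nu c_1^2\mapsto\varepsilon^2(c_2-\nu c_1^2)$, a pure phase. Choosing $\varepsilon$ suitably I may therefore assume $c_1=t$ with $t\in[0,2]$, the range coming from Lemma~\ref{L1}.

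Substituting the parametrization with $c_1=t$ real gives
\[
c_2-\nu c_1^2=\Bigl(\tfrac12-\nu\Bigr)t^2+\tfrac12(4-t^2)x ,
\]
and, since $4-t^2\ge0$ and $|x|\le1$, the triangle inequality produces
\[
\bigl|c_2-\nu c_1^2\bigr|\le\tfrac12\Bigl[(|1-2\nu|-1)\,t^2+4\Bigr]=:G(t).
\]
It remains to maximize $G$ on $[0,2]$ by inspecting the sign of the coefficient $|1-2\nu|-1$. For $0\le\nu\le1$ this coefficient is $\le0$, so $G$ is maximal at $t=0$ with value $2$. For $\nu\le0$ or $\nu\ge1$ it is $\ge0$, so $G$ is maximal at $t=2$ with value $2|1-2\nu|$, which equals $-4\nu+2$ when $\nu\le0$ and $4\nu-2$ when $\nu\ge1$. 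These are exactly the three claimed bounds.

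The computation is entirely routine; the only points requiring care are quoting the parametrization $2c_2=c_1^2+(4-c_1^2)x$ accurately and justifying the rotation reduction. The reality of $\nu$ enters precisely in splitting the parameter line into the three intervals $\nu\le0$, $0\le\nu\le1$, $\nu\ge1$, so I anticipate no genuine obstacle here.
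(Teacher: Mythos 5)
Your argument is correct. Note, however, that the paper does not prove this lemma at all: it is quoted from Ma and Minda as a known preliminary result, so there is no in-paper proof to compare against. Your derivation is a sound, self-contained reconstruction of the classical argument: the rotation $k(z)\mapsto k(\varepsilon z)$ does send $c_n\mapsto\varepsilon^n c_n$ and hence multiplies $c_2-\nu c_1^2$ by the phase $\varepsilon^2$, so reducing to $c_1=t\in[0,2]$ is legitimate (and is in fact needed before invoking the parametrization $2c_2=c_1^2+x(4-c_1^2)$, i.e.\ the paper's Lemma~\ref{Lc}, which is stated for real $c_1$ --- you apply it after the normalization, which is the right order). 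The resulting maximization of $G(t)=\tfrac12\bigl[(|1-2\nu|-1)t^2+4\bigr]$ over $[0,2]$ and the three-way case split are all correct, and your side remark is also accurate: for real $\nu$ the stated bounds are exactly $2\max\{1,|2\nu-1|\}$, so the lemma is the real-parameter specialization of inequality~\eqref{lc} in Lemma~\ref{L1} (itself also quoted without proof). The only cosmetic caveat is that equality cases are not discussed, but the lemma as stated does not claim sharpness, so nothing is missing.
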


\begin{lemma}
\label{Lc}\cite{8p2, 8p3} If $k\in \mathcal{P}$ be expressed in series
expansion $\left( \ref{p1}\right) $, then%
\begin{equation*}
2c_{2}=c_{1}^{2}+x\left( 4-c_{1}^{2}\right)
\end{equation*}%
for some $x$, $\left \vert x\right \vert \leq 1$ and%
\begin{equation*}
4c_{3}=c_{1}^{3}+2\left( 4-c_{1}^{2}\right) c_{1}x-\left( 4-c_{1}^{2}\right)
c_{1}x^{2}+2\left( 4-c_{1}^{2}\right) \left( 1-\left \vert x\right \vert
^{2}\right) z
\end{equation*}%
for some $z$, $\left \vert z\right \vert \leq 1.$
\end{lemma}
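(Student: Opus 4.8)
The plan is to pass from the Carath\'eodory function $k$ to its associated Schwarz function, read off $c_1,c_2,c_3$ as polynomials in the Schwarz coefficients, and then parametrise the latter by the Schur algorithm.

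First I would use the standard equivalence characterising $\mathcal{P}$: write $k(z)=\dfrac{1+\omega(z)}{1-\omega(z)}$, where $\omega(z)=w_1z+w_2z^2+\cdots$ is a Schwarz function, i.e.\ $\omega(0)=0$ and $|\omega(z)|\le|z|$ in $\mathbb{D}$. Expanding $\dfrac{1+\omega}{1-\omega}=1+2\sum_{j\ge1}\omega^{j}$ and collecting powers of $z$ gives
\[
c_1=2w_1,\qquad c_2=2\bigl(w_2+w_1^2\bigr),\qquad c_3=2\bigl(w_3+2w_1w_2+w_1^3\bigr).
\]
After the rotation $k(z)\mapsto k(e^{i\theta}z)$ I may normalise $c_1$ to be real and nonnegative, $c_1\in[0,2]$ (this is the form in which the lemma is invoked; see the closing remark), so that $w_1=c_1/2$ is real and $1-|w_1|^2=(4-c_1^2)/4$.

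Next I would parametrise $w_2$ and $w_3$ by the Schur algorithm. Setting $b_1(z)=\omega(z)/z$ and $b_{n+1}(z)=\dfrac1z\cdot\dfrac{b_n(z)-b_n(0)}{1-\overline{b_n(0)}\,b_n(z)}$ produces a sequence of analytic self-maps of $\mathbb{D}$ into $\overline{\mathbb{D}}$ (each step is a Schwarz--Pick/M\"obius reduction), whence $|b_n(0)|\le1$. Comparing the leading Taylor coefficients of $b_1,b_2,b_3$ and writing $x:=b_2(0)$, $z:=b_3(0)$ yields
\[
w_2=(1-|w_1|^2)\,x,\qquad w_3=(1-|w_1|^2)\bigl[(1-|x|^2)\,z-\overline{w_1}\,x^2\bigr],\qquad |x|\le1,\ |z|\le1.
\]

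Finally I would substitute. The first identity is immediate: $2c_2=c_1^2+4w_2=c_1^2+(4-c_1^2)x$. For the second, inserting the above into $c_3=2w_3+4w_1w_2+2w_1^3$ with $\overline{w_1}=w_1=c_1/2$ and multiplying by $4$ collapses, after collecting the $x$, $x^2$ and $(1-|x|^2)z$ contributions, to exactly
\[
4c_3=c_1^3+2(4-c_1^2)c_1x-(4-c_1^2)c_1x^2+2(4-c_1^2)(1-|x|^2)z.
\]
The main obstacle is the third step of the Schur recursion, namely the formula for $w_3$: one must track the cross term $\overline{w_1}\,x^2$, which produces the summand $-(4-c_1^2)c_1x^2$, together with the factor $1-|x|^2$, which produces $2(4-c_1^2)(1-|x|^2)z$, without sign or bookkeeping errors. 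A secondary point worth flagging is the normalisation $c_1\in[0,2]$: without it the rotation-covariant version carries $4-|c_1|^2$ rather than $4-c_1^2$, so the identities hold as stated only after $k$ has been rotated so that $c_1\ge0$ --- a reduction that is harmless in all the coefficient-functional estimates where the lemma is used.
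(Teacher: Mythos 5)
Your derivation is correct, but it is worth saying up front that the paper offers no proof of this lemma at all: it is quoted verbatim from Libera and Z{\l}otkiewicz \cite{8p2,8p3} as a known preliminary. So any comparison is really with the classical proof in those references, which rests on the Carath\'eodory--Toeplitz description of the coefficient body of $\mathcal{P}$ (non-negativity of the associated Toeplitz determinants), rather than on the Schur algorithm. Your route --- passing to the Schwarz function $\omega$, reading off $c_1=2w_1$, $c_2=2(w_2+w_1^2)$, $c_3=2(w_3+2w_1w_2+w_1^3)$, and then parametrising $w_2,w_3$ by successive Schur reductions --- is a legitimate and essentially equivalent alternative, and your formulas for $w_2$ and $w_3$ check out exactly, reproducing both displayed identities after substitution. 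Your flag about the normalisation is also well taken: as printed, the identities with $4-c_1^2$ (rather than $4-|c_1|^2$) presuppose $c_1$ real and non-negative, which the paper silently arranges only at the point of use (``put $c_1=c\in[0,2]$'' in the proof of Theorem \ref{Th1.3}), so the lemma as stated is slightly imprecise and your reduction by rotation is the right repair. The only loose end in your argument is the degenerate case $|b_n(0)|=1$, where the next M\"obius step is undefined; but then $b_n$ is a unimodular constant by the maximum principle, the factor $4-c_1^2$ (respectively $1-|x|^2$) vanishes, and the identities hold with $x$ (respectively $z$) chosen arbitrarily, so this costs nothing.
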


\begin{lemma}
\label{Lj}If $k\in \mathcal{P}$ be expressed in series expansion $\left( %
\ref{p1}\right) $, then%
\begin{equation}
\left \vert ac_{1}^{3}-bc_{1}c_{2}+dc_{3}\right \vert \leq 2\left \vert
a\right \vert +2\left \vert b-2a\right \vert +2\left \vert a-b+d\right \vert
\label{ee1}
\end{equation}
\end{lemma}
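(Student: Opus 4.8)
The plan is to bound the functional by writing it as a combination of three elementary Carath\'eodory functionals, each of modulus at most $2$. Since $k\in\mathcal{P}$ implies $k(e^{i\theta}z)\in\mathcal{P}$, and the induced map $c_n\mapsto e^{in\theta}c_n$ multiplies the whole expression $ac_1^3-bc_1c_2+dc_3$ by $e^{3i\theta}$, its modulus is rotation invariant; hence I may assume $c_1=c\in[0,2]$ is real and nonnegative. The algebraic heart of the argument is the identity
\[
ac_1^3-bc_1c_2+dc_3 = a\bigl(c_1^3-2c_1c_2+c_3\bigr)+(b-2a)\bigl(c_3-c_1c_2\bigr)+(a-b+d)c_3,
\]
which one checks by collecting the coefficients of $c_1^3$, $c_1c_2$ and $c_3$ (they equal $a$, $-b$ and $d$, respectively). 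Applying the triangle inequality to this identity reduces the claim to the three scalar estimates
\[
|c_1^3-2c_1c_2+c_3|\le 2,\qquad |c_3-c_1c_2|\le 2,\qquad |c_3|\le 2,
\]
since these give $|ac_1^3-bc_1c_2+dc_3|\le 2|a|+2|b-2a|+2|a-b+d|$ at once.

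The estimate $|c_3|\le 2$ is immediate from Lemma~\ref{L1}. For the second one I would insert the Libera--Zlotkiewicz expressions of Lemma~\ref{Lc}, namely $2c_2=c_1^2+x(4-c_1^2)$ together with the displayed formula for $4c_3$, where $|x|\le 1$ and $|z|\le 1$. A short computation yields
\[
c_3-c_1c_2 = -\tfrac14 c_1^3-\tfrac14(4-c_1^2)c_1x^2+\tfrac12(4-c_1^2)(1-|x|^2)z,
\]
so with $t=|x|$ one gets $|c_3-c_1c_2|\le \tfrac14 c^3+\tfrac14(4-c^2)ct^2+\tfrac12(4-c^2)(1-t^2)$. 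As a function of $t^2$ this is nonincreasing (its derivative equals $\tfrac14(4-c^2)(c-2)\le 0$), so the maximum is at $t=0$, where it equals $2-\tfrac12 c^2+\tfrac14 c^3\le 2$ for $c\in[0,2]$. This settles the second estimate.

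The main obstacle is the first estimate $|c_1^3-2c_1c_2+c_3|\le 2$: here a term-by-term triangle inequality on the Lemma~\ref{Lc} expansion overshoots the value $2$, so I must exploit cancellation \emph{before} estimating. Using $c_1^2-2c_2=-x(4-c_1^2)$ I first reduce to $c_1^3-2c_1c_2+c_3=c_3-c_1x(4-c_1^2)$, and after inserting the formula for $c_3$ and completing the square (via $c^2-2(4-c^2)x-(4-c^2)x^2=4-(4-c^2)(1+x)^2$) I arrive at
\[
c_1^3-2c_1c_2+c_3 = c-\tfrac14 c(4-c^2)(1+x)^2+\tfrac12(4-c^2)(1-|x|^2)z.
\]
The delicate point is that the factor $(1+x)^2$ must \emph{not} be replaced by $(1+|x|)^2$; instead one keeps the genuine modulus and optimizes over the phase of $x$ (and over the free parameter $z$ with $|z|\le1$) and over $c\in[0,2]$. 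Carrying out this optimization gives the sharp value $2$, attained when $x=-1$ or at the endpoint $c=2$ (equivalently, passing to the coefficients of the Schwarz function $w$ with $k=(1+w)/(1-w)$, the quantity becomes $2(w_3-2w_1w_2+w_1^3)$, whose modulus is at most $2$ by a Prokhorov--Szynal-type bound). Substituting the three bounds into the displayed identity proves the lemma, and the choice $k(z)=(1+z)/(1-z)$ (so $c_1=c_2=c_3=2$) with $a=b=d=1$ shows that it is sharp.
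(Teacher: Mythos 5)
The paper states Lemma~\ref{Lj} without proof and without citation, so there is no in-paper argument to compare yours against; what you have written is essentially the standard proof of this inequality from the literature. Your decomposition $ac_1^3-bc_1c_2+dc_3=a\bigl(c_1^3-2c_1c_2+c_3\bigr)+(b-2a)\bigl(c_3-c_1c_2\bigr)+(a-b+d)c_3$ is correct (the coefficients of $c_1^3$, $c_1c_2$ and $c_3$ do match), and your verifications of $|c_3|\le 2$ and $|c_3-c_1c_2|\le 2$ via Lemma~\ref{Lc} are complete: the derivative in $t^2$ is indeed $\tfrac14(4-c^2)(c-2)\le 0$ and the endpoint value $2-\tfrac12c^2+\tfrac14c^3$ is at most $2$ on $[0,2]$. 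The sharpness check with $a=b=d=1$ and $k(z)=(1+z)/(1-z)$, giving $|8-4+2|=6=2+2+2$, is also right.

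The one soft spot is exactly where you flag it: the estimate $|c_1^3-2c_1c_2+c_3|\le 2$. Your reduction to $c-\tfrac14c(4-c^2)(1+x)^2+\tfrac12(4-c^2)(1-|x|^2)z$ is correct, but the sentence ``carrying out this optimization gives the sharp value $2$'' stands in for the actual work, and this optimization is genuinely delicate: the value $2$ is attained along a whole curve of parameters (for instance $c=1$, $x=1$, and for every $c\in[1,2]$ at a suitable unimodular $x$), so the bound holds with no room to spare and cannot be waved through. Your fallback citation is sound --- in Schwarz-function coordinates the functional equals $2(w_3-2w_1w_2+w_1^3)$ and $(\mu,\nu)=(-2,1)$ lies in the closure of the Prokhorov--Szynal region where the bound is $1$ --- but there is a much cheaper way to close this step while keeping the proof self-contained: expanding $k(z)\cdot\bigl(1/k(z)\bigr)=1$ shows that $-(c_1^3-2c_1c_2+c_3)$ is precisely the coefficient of $z^3$ in $1/k(z)$, and $1/k\in\mathcal{P}$ whenever $k\in\mathcal{P}$ (since $\Re(1/k)=\Re k/|k|^2>0$ and $1/k$ takes the value $1$ at the origin), so $|c_1^3-2c_1c_2+c_3|\le 2$ follows at once from \eqref{1}. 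With that substitution your argument is complete and fully elementary.
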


\begin{lemma}
\label{La}\cite{Ravichandran} Let $m,n,l$ and $r$ satisfy the inequalities $%
0<m<1,0<r<1$ and
\begin{multline*}
  8r\left( 1-r\right) \left( \left( mn-2l\right) ^{2}+\left( m\left(
r+m\right) -n\right) ^{2}\right) +m\left( 1-m\right) \left( n-2rm\right) ^{2}\\
 \leq 4m^{2}\left( 1-m\right) ^{2}r\left( 1-r\right).
\end{multline*}

If $k\in \mathcal{P}$ and has power series $\left( \ref{p1}\right) $ then
\begin{equation*}
\left \vert lc_{1}^{4}+rc_{2}^{2}+2mc_{1}c_{3}-\frac{3}{2}%
nc_{1}^{2}c_{2}-c_{4}\right \vert \leq 2.
\end{equation*}
\end{lemma}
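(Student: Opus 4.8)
The plan is to reduce the inequality to a bounded real optimisation over the standard disc parameters of a Carath\'eodory function. First observe that under the rotation $k(z)\mapsto k(e^{i\theta}z)$ each coefficient transforms as $c_n\mapsto e^{in\theta}c_n$, so every monomial $c_1^4$, $c_2^2$, $c_1c_3$, $c_1^2c_2$, $c_4$ picks up the \emph{same} factor $e^{4i\theta}$; hence the modulus of the functional is rotation invariant and I may assume $c_1=c\in[0,2]$. I would then insert the Libera--Z\l otkiewicz representations: the formulas for $2c_2$ and $4c_3$ recorded in Lemma~\ref{Lc} (with parameter $x$ and a second parameter $\zeta$, $|\zeta|\le1$), together with the standard companion expression for $c_4$,
$$8c_4=c^4+(4-c^2)\,x\bigl[c^2(x^2-3x+3)+4x\bigr]-4(4-c^2)(1-|x|^2)\bigl[c(x-1)\zeta+\bar{x}\zeta^2-(1-|\zeta|^2)\rho\bigr],$$
valid for some $\rho$ with $|\rho|\le1$. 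After substitution the functional $\Psi:=lc_1^4+rc_2^2+2mc_1c_3-\frac{3}{2}nc_1^2c_2-c_4$ becomes a polynomial in $c,x,\zeta,\rho$.

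The elimination proceeds parameter by parameter. The quantity $\rho$ enters only through $-c_4$, linearly, multiplied by $\frac{1}{2}(4-c^2)(1-|x|^2)(1-|\zeta|^2)$; applying the triangle inequality and taking $|\rho|=1$ discards it at the cost of adding this nonnegative term to the bound. Treating what remains as a function of $\zeta$ with $|\zeta|\le1$, I would collect the quadratic-in-$\zeta$ part and bound it using $|\zeta|\le1$, which leaves a function $\Phi(c,y)$ of the two real variables $c\in[0,2]$ and $y:=|x|\in[0,1]$. It then remains to show $\Phi(c,y)\le2$ on this rectangle.

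I would establish $\Phi\le2$ by examining the boundary edges $c\in\{0,2\}$, $y\in\{0,1\}$, where $\Phi$ reduces to a one-variable polynomial bounded by $2$, with equality forced at $(c,y)=(0,0)$, corresponding to the extremal function $k(z)=(1+z^4)/(1-z^4)$ (for which $c_1=c_2=c_3=0$, $c_4=\pm2$ and the successive choices $|\rho|=1$, $\zeta=0$, $x=0$ are simultaneously sharp). For the interior one sets $\partial_c\Phi=\partial_y\Phi=0$; the hypothesis
$$8r(1-r)\bigl((mn-2l)^2+(m(r+m)-n)^2\bigr)+m(1-m)(n-2rm)^2\le4m^2(1-m)^2r(1-r)$$
is precisely the algebraic condition ensuring that the value of $\Phi$ at any admissible interior critical point does not exceed $2$.

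The main obstacle is this last matching. Carrying out the $\zeta$-elimination cleanly and then identifying the interior critical value of the two-variable majorant with the stated quartic condition in $m,n,l,r$ is bulky, because the $c_4$ formula inflates the intermediate expressions; one must also check that the triangle-inequality steps in $\rho$ and $\zeta$ are compatible with the extremiser so that the bound $2$ is genuinely attained and the derived inequality is sharp. Once $\Phi$ is organised so that $2-\Phi(c,y)$ appears as a manifestly nonnegative combination whose defining constraint is exactly the left-hand side of the hypothesis, the desired bound $|\Psi|\le2$ follows at once.
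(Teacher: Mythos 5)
The paper does not prove this lemma at all: it is quoted verbatim from Ravichandran and Verma \cite{Ravichandran} and used as a black box (only the special case $l=\tfrac{5}{144}$, $r=\tfrac14$, $m=\tfrac16$, $n=\tfrac{5}{36}$ is ever invoked, in the bound for $a_5$). So there is no in-paper argument to compare against, and your proposal has to stand on its own. As a standalone proof it has a genuine gap: the entire content of the lemma is the passage from the quartic hypothesis on $(m,n,l,r)$ to the bound $2$, and that is exactly the step you assert rather than carry out. Writing that the hypothesis ``is precisely the algebraic condition ensuring that the value of $\Phi$ at any admissible interior critical point does not exceed $2$'' is a restatement of the goal, not a derivation; until $2-\Phi(c,y)$ is actually exhibited as a nonnegative combination controlled by the left-hand side of the hypothesis, nothing has been proved.

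There is also a structural reason to doubt the route as described. Your two successive triangle-inequality eliminations (first of $\rho$, then of the quadratic-in-$\zeta$ block) are each lossy, and they need not be simultaneously tight; the resulting majorant $\Phi(c,y)$ can therefore exceed $2$ on the rectangle even when $|\Psi|\le 2$ holds, in which case the method simply fails to close. The shape of the hypothesis --- a weighted sum of the squares $(mn-2l)^2$, $(m(r+m)-n)^2$, $(n-2rm)^2$ dominated by $4m^2(1-m)^2r(1-r)$ --- signals that the original argument groups $\Psi$ into blocks estimated by Cauchy--Schwarz against the ``budgets'' $m(1-m)$ and $r(1-r)$, with each square arising as the coefficient of one block; a crude pointwise majorisation in $(c,y)$ is unlikely to reproduce it. Two further, smaller points: the rotation-invariance reduction to $c_1=c\in[0,2]$ is correct (all five monomials are homogeneous of weight $4$), but the $c_4$ parametrisation you quote is nonstandard and must be verified before use (it does pass the sanity checks $k(z)=(1+z^k)/(1-z^k)$, $k=1,2,3,4$); and the claimed extremal $k(z)=(1+z^4)/(1-z^4)$ only certifies sharpness, it does not help establish the inequality.
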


\begin{lemma}
\cite{jack} \label{R} Let $w\left( z\right) $ be analytic in $\mathbb{D}$ with $w\left( 0\right) =0.$ If $%
\left \vert w\left( z\right) \right \vert $ attains its maximum value on the
circle $\left \vert z\right \vert =r$ at a point $z_{0}=re^{i\theta },$ for $%
\theta \in \left[ -\pi ,\pi \right] ,$ we can write that%
\begin{equation*}
z_{0}w^{\prime }\left( z_{0}\right) =mw\left( z_{0}\right) ,
\end{equation*}%
where $m$ is real and $m\geq 1.$
\end{lemma}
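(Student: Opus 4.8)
The plan is to reduce the statement to a boundary version of the Schwarz lemma through a normalization, so that no second-derivative computation is needed. I may assume $w\not\equiv 0$, since otherwise $w(z_0)=0$ and the identity $z_0w'(z_0)=m\,w(z_0)$ holds trivially for every $m$; thus $M:=|w(z_0)|>0$ is the maximum of $|w|$ on the closed disc $|z|\leq r$, the equality of the boundary maximum and the disc maximum being the maximum modulus principle. First I would introduce the auxiliary function
\[
\Phi(z):=\frac{w(z_0 z)}{w(z_0)},
\]
which is analytic on $\overline{\mathbb{D}}$ because $|z_0|=r<1$ and $w$ is analytic in $\mathbb{D}$. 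By construction $\Phi(0)=0$, and $|\Phi(z)|\leq 1$ for $|z|\leq 1$ since $|z_0z|\leq r$ forces $|w(z_0z)|\leq M$; moreover $\Phi(1)=1$, so $\Phi$ is a Schwarz-type self-map of the disc whose modulus attains its maximal value $1$ at the boundary point $z=1$. A direct differentiation gives $\Phi'(1)=z_0w'(z_0)/w(z_0)=:m$, so the whole lemma is equivalent to showing that $\Phi'(1)$ is real and $\Phi'(1)\geq 1$.

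Next I would extract the two required conclusions from first-order optimality, using that $\Phi$ is analytic across $z=1$. Writing $z=e^{i\theta}$ and differentiating, one has
\[
\frac{d}{d\theta}\log\bigl|\Phi(e^{i\theta})\bigr|=-\,\Im\!\left(\frac{z\Phi'(z)}{\Phi(z)}\right).
\]
Since $|\Phi(e^{i\theta})|\leq 1=|\Phi(1)|$, the map $\theta\mapsto\log|\Phi(e^{i\theta})|$ has a maximum at $\theta=0$, so its derivative vanishes there; evaluating at $z=1$ with $\Phi(1)=1$ yields $\Im(\Phi'(1))=0$, i.e.\ $m$ is real. For the size of $m$ I would use the radial direction: Schwarz's lemma gives $|\Phi(z)|\leq|z|$, hence $\rho\mapsto\log|\Phi(\rho)|-\log\rho\leq 0$ on $(0,1]$ with equality at $\rho=1$, so this quantity increases up to $\rho=1$ and its one-sided derivative there is nonnegative. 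Since the radial derivative of $\log|\Phi|$ along the positive real axis equals $\Re(\Phi'(\rho)/\Phi(\rho))$, evaluating at $\rho=1$ gives $\Re(\Phi'(1))-1\geq 0$, that is $\Re(m)\geq 1$.

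Combining the two observations, $m$ is real and $m\geq 1$, and since $m=\Phi'(1)=z_0w'(z_0)/w(z_0)$ we conclude $z_0w'(z_0)=m\,w(z_0)$ with $m\geq 1$, as claimed. The one genuinely delicate point — and the step I would treat most carefully — is the legitimacy of the boundary derivative computations: they rely on $\Phi$ being analytic in a full neighbourhood of $z=1$, which is precisely what the hypothesis $r<1$ guarantees, together with the passage from the one-sided radial monotonicity of $\log|\Phi|$ to the strict-type inequality $\Re(m)\geq 1$ (a Hopf/L\"owner boundary-point argument). If one prefers not to invoke analyticity at the boundary, the same two facts can instead be read off from the interior first- and second-order conditions for the maximum of $|w(re^{i\theta})|^2$ at $\theta=\theta_0$, at the cost of a longer computation that brings in $w''$.
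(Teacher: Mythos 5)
Your proof is correct. Note, however, that the paper does not prove this lemma at all: it is Jack's lemma, quoted verbatim from the cited reference \cite{jack} as a preliminary, so there is no in-paper argument to compare against. Your normalization $\Phi(z)=w(z_0z)/w(z_0)$ followed by the two first-order boundary conditions (tangential stationarity of $\log|\Phi(e^{i\theta})|$ at $\theta=0$ giving $\Im\,\Phi'(1)=0$, and the radial comparison $|\Phi(\rho)|\leq\rho$ from the Schwarz lemma giving $\Re\,\Phi'(1)\geq 1$) is essentially the standard modern proof of Jack's lemma; the only points needing the care you already flag are that $M>0$ unless $w\equiv 0$, and that analyticity of $\Phi$ across $|z|=1$ (guaranteed by $r<1$) legitimizes the boundary derivatives.
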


\section{\textsc{Main Results}}
We begin with the following result:
\begin{theorem}
\label{Thc}Let $f\in \mathcal{A}$ be of the form $\left( \ref{eq1}\right) $%
. Then $f\in \mathcal{G}_{sh},$ if and only if%
\begin{equation}
\frac{1}{z}\left( f(z)\ast \frac{z-\beta z^{2}}{\left( 1-z\right) ^{2}}%
\right) \neq 0,  \label{bk1}
\end{equation}%
where $\beta =\beta _{\theta }=\dfrac{1+\sinh \left( e^{i\theta }\right) }{%
\sinh \left( e^{i\theta }\right) }.$
\end{theorem}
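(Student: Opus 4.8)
The plan is to turn the defining subordination into a non-vanishing statement and then rewrite that statement with the two reproducing kernels recorded in the introduction. By definition $f\in\mathcal{G}_{sh}$ precisely when $p(z):=zf'(z)/f(z)\prec\phi(z)$, where $\phi(z)=1+\sinh z$. Since $\phi'(z)=\cosh z$ has no zero on $\overline{\mathbb{D}}$ and $\sinh$ identifies no two points of $\mathbb{D}$ (the required coincidences force $|a+b|\ge\pi$ or $|a-b|\ge 2\pi$, impossible in $\mathbb{D}$), $\phi$ is univalent and sends $\partial\mathbb{D}$ onto the Jordan curve $\partial\phi(\mathbb{D})$. As $p(0)=1=\phi(0)$, I would invoke the standard principle that for such $\phi$ the subordination $p\prec\phi$ holds if and only if $p$ omits every boundary value, i.e.
\begin{equation*}
\frac{zf'(z)}{f(z)}\neq \phi(e^{i\theta})=1+\sinh(e^{i\theta}),\qquad z\in\mathbb{D},\ \theta\in[-\pi,\pi].
\end{equation*}
The forward direction is immediate because $\phi(e^{i\theta})$ lies on $\partial\phi(\mathbb{D})$ while $p(\mathbb{D})\subseteq\phi(\mathbb{D})$; for the converse I would argue by connectedness, that $p(\mathbb{D})$ is connected and contains the interior point $p(0)=\phi(0)$, so leaving $\phi(\mathbb{D})$ would force $p$ to meet $\phi(\partial\mathbb{D})$, contradicting the omission, after which the open mapping theorem places $p(\mathbb{D})$ inside the region.

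The remaining part is the kernel computation. Once $p$ is analytic the condition above is equivalent to $zf'(z)-\phi(e^{i\theta})f(z)\neq0$ on $\mathbb{D}\setminus\{0\}$. Using $f=f\ast\frac{z}{1-z}$ and $zf'=f\ast\frac{z}{(1-z)^2}$ together with linearity of the Hadamard product,
\begin{equation*}
zf'(z)-\phi(e^{i\theta})f(z)=f\ast\left(\frac{z}{(1-z)^2}-\phi(e^{i\theta})\frac{z}{1-z}\right).
\end{equation*}
Putting the kernel over $(1-z)^2$ and using $1-\phi(e^{i\theta})=-\sinh(e^{i\theta})$, its numerator is $-\sinh(e^{i\theta})\,z+(1+\sinh(e^{i\theta}))z^2=-\sinh(e^{i\theta})(z-\beta z^2)$ with $\beta=\beta_\theta=(1+\sinh(e^{i\theta}))/\sinh(e^{i\theta})$, so the expression equals $-\sinh(e^{i\theta})\bigl(f\ast\frac{z-\beta z^2}{(1-z)^2}\bigr)$. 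Since the zeros of $\sinh$ are the points $in\pi$, all of modulus $\ge\pi>1$, one has $\sinh(e^{i\theta})\neq0$ for every real $\theta$; cancelling this factor and dividing by $z$ yields exactly \eqref{bk1}.

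The kernel manipulation is routine; the part demanding care is the equivalence in the first paragraph. Two points must be handled: that $\phi=1+\sinh$ is genuinely univalent with $\phi(\partial\mathbb{D})$ a Jordan curve, so that omission of boundary values really characterizes $p(\mathbb{D})\subseteq\phi(\mathbb{D})$; and that in the converse direction one is entitled to form $p=zf'/f$, which presupposes $f(z)/z\neq0$. The coefficient of $z$ in $f\ast\frac{z-\beta z^2}{(1-z)^2}$ is $1$, so the condition is automatically satisfied at the origin with value $1$; this is the normalization point, and the only genuine subtlety is confirming $p$ is well defined off the origin, which is where I would, if needed, record the analyticity of $zf'/f$ explicitly to close the argument.
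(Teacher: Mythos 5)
Your proof is correct and follows essentially the same route as the paper: translate the subordination $zf'(z)/f(z)\prec 1+\sinh z$ into the omission of the boundary values $1+\sinh(e^{i\theta})$, and then rewrite $zf'(z)-(1+\sinh(e^{i\theta}))f(z)$ as a convolution with the kernel $(z-\beta z^{2})/(1-z)^{2}$. If anything, you justify more than the paper does, which simply asserts the equivalence with the boundary-omission condition, whereas you verify that $\sinh$ is univalent on $\overline{\mathbb{D}}$ and that $\sinh(e^{i\theta})\neq 0$ --- facts the argument genuinely needs.
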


\begin{proof}
Let $f\in \mathcal{G}_{sh},$ if and only if
\begin{equation*}
\frac{zf^{\prime }(z)}{f(z)}\prec1+\sinh \left( z\right).
\end{equation*}%
if and only if there exist a Schwartz function $s(z)$ such that
\begin{align*}
& \frac{zf^{\prime }(z)}{f(z)}= 1+\sinh \left( s(z) \right)\quad
(z\in\mathbb{D})   \\
\Leftrightarrow&\frac{zf^{\prime }(z)}{f(z)}\neq 1+\sinh \left( e^{i\theta }\right) ,\quad
(z\in\mathbb{D};\theta \in \left[ 0,2\pi \right))   \\
\Leftrightarrow& \frac{1}{z}\big(zf^{\prime }(z)- f(z)\left( 1+\sinh \left( e^{i\theta }\right) \right)\big)\neq 0\\
\Leftrightarrow& \frac{1}{z}\left( f(z)\ast \frac{z-\beta z^{2}}{\left( 1-z\right) ^{2}}%
\right) \neq 0,
\end{align*}
where $\beta $ is as given above and that completes the proof.
\end{proof}
Note that the forward part of Theorem \ref{Thc} also holds for $\beta=1.$ As if $f\in \mathcal{G}_{sh},$ then $f$ is analytic in $\mathbb{D}$
and thus $f(z)/z\neq0.$
\begin{theorem}
\label{th2}Let $f\in \mathcal{A}$ be of the form $\left( \ref{eq1}\right) $.
Then necessary and sufficient condition for function $f\left( z\right) $
belong to class $\mathcal{G}_{sh}$ is that%
\begin{equation}
1-\sum_{n=2}^{\infty }\frac{n-\left( 1+\sinh \left( e^{i\theta }\right)
\right) }{\sinh \left( e^{i\theta }\right) }a_{n}z^{n-1}\neq 0.  \label{gk}
\end{equation}
\end{theorem}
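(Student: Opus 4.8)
The plan is to obtain \eqref{gk} directly from the convolution characterization of Theorem~\ref{Thc} by evaluating the Hadamard product in \eqref{bk1} as an explicit power series. Since Theorem~\ref{Thc} already gives that $f\in\mathcal{G}_{sh}$ if and only if
\begin{equation*}
\frac{1}{z}\left( f(z)\ast \frac{z-\beta z^{2}}{\left( 1-z\right)^{2}}\right)\neq 0,\qquad \beta=\frac{1+\sinh(e^{i\theta})}{\sinh(e^{i\theta})},
\end{equation*}
it suffices to show that the left-hand side of this inequality coincides termwise with the left-hand side of \eqref{gk}. Thus the whole content of the statement is carried by Theorem~\ref{Thc}, and what remains is an identification of two series.

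First I would expand the convolution kernel. Using $\frac{z}{(1-z)^{2}}=\sum_{n=1}^{\infty}n z^{n}$ together with $\frac{z^{2}}{(1-z)^{2}}=\sum_{n=2}^{\infty}(n-1)z^{n}$, one obtains
\begin{equation*}
\frac{z-\beta z^{2}}{(1-z)^{2}}=z+\sum_{n=2}^{\infty}\bigl(n-\beta(n-1)\bigr)z^{n}.
\end{equation*}
Taking the Hadamard product with $f(z)=z+\sum_{n=2}^{\infty}a_{n}z^{n}$ multiplies the coefficients termwise, and after dividing by $z$ this yields
\begin{equation*}
\frac{1}{z}\left( f(z)\ast \frac{z-\beta z^{2}}{(1-z)^{2}}\right)=1+\sum_{n=2}^{\infty}\bigl(n-\beta(n-1)\bigr)a_{n}z^{n-1}.
\end{equation*}

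The remaining step is to substitute the value of $\beta$ and simplify the coefficient $n-\beta(n-1)$. Clearing the denominator gives
\begin{equation*}
n-\beta(n-1)=n-\frac{\bigl(1+\sinh(e^{i\theta})\bigr)(n-1)}{\sinh(e^{i\theta})}=-\,\frac{n-\bigl(1+\sinh(e^{i\theta})\bigr)}{\sinh(e^{i\theta})},
\end{equation*}
which is precisely the coefficient appearing in \eqref{gk}. Hence the expression in \eqref{bk1} equals the left-hand side of \eqref{gk}, and the desired equivalence follows from Theorem~\ref{Thc}. The only genuine computation is this last coefficient simplification, and I expect no real obstacle: the argument is an explicit matching of power series, with the substantive equivalence supplied entirely by Theorem~\ref{Thc}.
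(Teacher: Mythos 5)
Your proposal is correct and follows essentially the same route as the paper: both deduce the result from Theorem~\ref{Thc} by writing the convolution in \eqref{bk1} as an explicit power series and then substituting the value of $\beta$ to obtain the coefficient $\frac{n-(1+\sinh(e^{i\theta}))}{\sinh(e^{i\theta})}$. The only cosmetic difference is that the paper first rewrites the convolution as $\frac{1}{z}\bigl(zf'(z)-\beta(zf'(z)-f(z))\bigr)$ via the identities $f\ast\frac{z}{(1-z)^{2}}=zf'$ and $f\ast\frac{z}{1-z}=f$ before expanding, whereas you expand the kernel $\frac{z-\beta z^{2}}{(1-z)^{2}}$ directly; the resulting series and coefficient simplification are identical.
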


\begin{proof}
In the light of above Theorem \ref{Thc}, we show that $\mathcal{G}_{sh
}$ if and only if
\begin{eqnarray*}
0 &\neq &\frac{1}{z}\left[ f(z)\ast \frac{z-\beta z^{2}}{\left( 1-z\right)
^{2}}\right] \\
&=&\frac{1}{z}\left( zf^{\prime }(z)-\beta \left( zf^{\prime
}(z)-f(z)\right) \right) \\
&=&1-\sum_{n=2}^{\infty }\left( \left( \beta -1\right) n-\beta \right)
a_{n}z^{n-1} \\
&=&1-\sum_{n=2}^{\infty }\frac{n-\left( 1+\sinh \left( e^{i\theta }\right)
\right) }{\sinh \left( e^{i\theta }\right) }a_{n}z^{n-1}.
\end{eqnarray*}%
Hence the proof completes.
\end{proof}

\begin{theorem}
\label{th1.1}Let $f\in \mathcal{A}$ and satisfies
\begin{equation}
\sum_{n=2}^{\infty }\left \vert \frac{n-\left( 1+\sinh \left( e^{i\theta
}\right) \right) }{\sinh \left( e^{i\theta }\right) }\right \vert \left
\vert a_{n}\right \vert <1,  \label{26}
\end{equation}%
then $f\in \mathcal{G}_{sh}.$
\end{theorem}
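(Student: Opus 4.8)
The plan is to deduce this sufficient condition directly from the convolution characterization established in Theorem \ref{th2}. That result tells us $f\in\mathcal{G}_{sh}$ precisely when
$$1-\sum_{n=2}^{\infty}\frac{n-\left(1+\sinh\left(e^{i\theta}\right)\right)}{\sinh\left(e^{i\theta}\right)}a_{n}z^{n-1}\neq 0$$
for every $z\in\mathbb{D}$ and every $\theta\in[0,2\pi)$. Hence it suffices to show that the hypothesis \eqref{26} forces the left-hand expression to stay away from zero throughout $\mathbb{D}$, uniformly in $\theta$.

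First I would fix $z\in\mathbb{D}$ with $|z|=r<1$ and apply the reverse triangle inequality to bound the modulus of that expression from below:
$$\left|1-\sum_{n=2}^{\infty}\frac{n-\left(1+\sinh\left(e^{i\theta}\right)\right)}{\sinh\left(e^{i\theta}\right)}a_{n}z^{n-1}\right|\geq 1-\sum_{n=2}^{\infty}\left|\frac{n-\left(1+\sinh\left(e^{i\theta}\right)\right)}{\sinh\left(e^{i\theta}\right)}\right||a_{n}|\,r^{n-1}.$$
Since $n\geq 2$ gives $r^{n-1}\leq r<1$, each term of the sum on the right is dominated by the corresponding term of the series in \eqref{26}; therefore the subtracted sum is strictly less than $1$ by hypothesis. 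This makes the lower bound strictly positive, so the displayed expression never vanishes on $\mathbb{D}$, and the desired conclusion $f\in\mathcal{G}_{sh}$ follows at once from Theorem \ref{th2}.

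Two minor points warrant a check before the triangle-inequality step is fully justified. The denominators $\sinh\left(e^{i\theta}\right)$ must be nonzero, which holds because $\sinh$ vanishes only at integer multiples of $i\pi$ while $\left|e^{i\theta}\right|=1<\pi$, so $\sinh\left(e^{i\theta}\right)\neq 0$ for every real $\theta$. Moreover, the series defining the expression converges absolutely on $\mathbb{D}$ once \eqref{26} holds, so the termwise estimate is legitimate. I do not anticipate a genuine obstacle: this is the standard coefficient-to-nonvanishing passage, and the only care required is retaining the factor $r^{n-1}$ and invoking $r<1$ to pass from the estimate at a fixed $z$ to the $z$-free bound supplied by \eqref{26}.
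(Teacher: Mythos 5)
Your proposal is correct and follows essentially the same route as the paper: invoke the non-vanishing criterion of Theorem \ref{th2}, apply the reverse triangle inequality, and use $|z|^{n-1}<1$ together with hypothesis \eqref{26} to obtain a strictly positive lower bound. Your added checks (non-vanishing of $\sinh(e^{i\theta})$ and absolute convergence) are sensible refinements but do not change the argument.
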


\begin{proof}
To show $f\in \mathcal{G}_{sh},$ we need to show $\left( \ref{gk}%
\right)$. Consider
\begin{eqnarray*}
\left \vert 1-\sum_{n=2}^{\infty }\left( \left( \beta -1\right) n-\beta
\right) a_{n}z^{n-1}\right \vert &>&1-\sum_{n=2}^{\infty }\left \vert \left(
\left( \beta -1\right) n-\beta \right) a_{n}z^{n-1}\right \vert \\
&=&1-\sum_{n=2}^{\infty }\left \vert \left( \left( \beta -1\right) n-\beta
\right) \right \vert \left \vert a_{n}\right \vert \left \vert z\right \vert
^{n-1} \\
&>&1-\sum_{n=2}^{\infty }\left \vert \left( \left( \beta -1\right) n-\beta
\right) \right \vert \left \vert a_{n}\right \vert \\
&=&1-\sum_{n=2}^{\infty }\left \vert \frac{n-\left( 1+\sinh \left(
e^{i\theta }\right) \right) }{\sinh \left( e^{i\theta }\right) }\right \vert
\left \vert a_{n}\right \vert >0,
\end{eqnarray*}%
so by Theorem \ref{th2}, $f\in \mathcal{G}_{sh}.$
\end{proof}

\begin{theorem}
The class $\mathcal{G}_{sh}$ is convex.
\end{theorem}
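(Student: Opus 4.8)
The plan is to prove convexity by showing that $\mathcal{G}_{sh}$ is closed under convex combinations, using the membership criteria already in hand. Recall that a subfamily of $\mathcal{A}$ is convex if whenever $f_{1},\dots,f_{m}\in\mathcal{G}_{sh}$ and $t_{1},\dots,t_{m}\ge 0$ with $\sum_{j=1}^{m}t_{j}=1$, the function $f=\sum_{j=1}^{m}t_{j}f_{j}$ again lies in $\mathcal{G}_{sh}$. First I would record that $f$ is normalized: writing $f_{j}(z)=z+\sum_{n=2}^{\infty}a_{n,j}z^{n}$, the combination is $f(z)=z+\sum_{n=2}^{\infty}\bigl(\sum_{j=1}^{m}t_{j}a_{n,j}\bigr)z^{n}$, so $f\in\mathcal{A}$ with coefficients $a_{n}=\sum_{j}t_{j}a_{n,j}$.

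Next I would feed these coefficients into the series criterion of Theorem \ref{th2} (equivalently the convolution condition \eqref{bk1} of Theorem \ref{Thc}). Because the Hadamard product is linear in its first argument and $\sum_{j}t_{j}=1$, the quantity in \eqref{gk} that governs membership factors as a convex combination:
\begin{equation*}
1-\sum_{n=2}^{\infty}\frac{n-\phi(e^{i\theta})}{\sinh(e^{i\theta})}a_{n}z^{n-1}
=\sum_{j=1}^{m}t_{j}\left(1-\sum_{n=2}^{\infty}\frac{n-\phi(e^{i\theta})}{\sinh(e^{i\theta})}a_{n,j}z^{n-1}\right).
\end{equation*}
Since each $f_{j}\in\mathcal{G}_{sh}$, Theorem \ref{th2} guarantees that every summand on the right is nonzero for all $z\in\mathbb{D}$ and all $\theta\in[0,2\pi)$.

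The step I expect to be the main obstacle is precisely the last one: a convex combination of finitely many nonzero complex numbers can still vanish, so the nonvanishing of each individual summand does \emph{not} by itself force the sum to be nonzero, and the naive ``sum of nonzero terms is nonzero'' shortcut must be avoided. The robust way I would close this gap is to run the argument through the sufficient coefficient condition \eqref{26} of Theorem \ref{th1.1}, whose defining inequality is convex in the coefficients. Indeed, applying the triangle inequality to $|a_{n}|=\bigl|\sum_{j}t_{j}a_{n,j}\bigr|\le\sum_{j}t_{j}|a_{n,j}|$ and summing gives
\begin{equation*}
\sum_{n=2}^{\infty}\left|\frac{n-\phi(e^{i\theta})}{\sinh(e^{i\theta})}\right||a_{n}|
\le\sum_{j=1}^{m}t_{j}\sum_{n=2}^{\infty}\left|\frac{n-\phi(e^{i\theta})}{\sinh(e^{i\theta})}\right||a_{n,j}|
<\sum_{j=1}^{m}t_{j}=1,
\end{equation*}
so $f$ again satisfies \eqref{26} and hence $f\in\mathcal{G}_{sh}$ by Theorem \ref{th1.1}. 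I would present this convexity-of-the-defining-inequality estimate as the heart of the proof, since it is what genuinely secures the required nonvanishing and therefore the closure of the family under convex combinations.
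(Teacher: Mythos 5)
Your argument is in substance the same as the paper's own proof: the paper likewise routes the convex combination through the triangle inequality on the coefficients and then invokes the sufficient condition of Theorem~\ref{th1.1} (it treats two functions rather than $m$, an immaterial difference since convexity is a two-point condition). You also correctly diagnose why the naive appeal to Theorem~\ref{th2} fails --- a convex combination of nonvanishing analytic functions need not be nonvanishing --- which the paper does not comment on.

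There is, however, a genuine gap in the closing estimate, one your proof shares with the paper's. The strict inequality $\sum_{n\ge 2}\bigl|\tfrac{n-1-\sinh(e^{i\theta})}{\sinh(e^{i\theta})}\bigr|\,|a_{n,j}|<1$ is precisely condition~\eqref{26}, and Theorem~\ref{th1.1} asserts only that \eqref{26} implies membership in $\mathcal{G}_{sh}$, not the converse. From the hypothesis $f_{j}\in\mathcal{G}_{sh}$ you are therefore not entitled to assume that each $f_{j}$ satisfies \eqref{26}, and the implication genuinely fails: for the extremal function $f_{0}(z)=z+z^{2}+\tfrac{1}{2}z^{3}+\tfrac{2}{9}z^{4}+\tfrac{7}{72}z^{5}+\cdots$ of \eqref{extremalform} and $\theta=0$, the partial sum of the left-hand side of \eqref{26} through $n=5$ is already about $0.149+0.351+0.345+0.234\approx 1.08>1$, so $f_{0}\in\mathcal{G}_{sh}$ violates \eqref{26}. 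What your computation (and the paper's) actually establishes is that the subfamily of $\mathcal{A}$ cut out by \eqref{26} is convex; convexity of $\mathcal{G}_{sh}$ itself does not follow from this argument and would require a different idea.
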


%----------------------------------------------------------------------

\begin{proof}
Let
\begin{equation*}
f_{i}(z)=z+\sum_{n=2}^{\infty }a_{n,i}z^{n},\text{ for }i=\left \{ 1,2\right
\} .
\end{equation*}%
We have to show that $\mu f_{1}(z)+\left( 1-\mu \right) f_{2}(z)\in \mathcal{G}_{sh}.$ As
\begin{eqnarray*}
&&\mu f_{1}(z)+\left( 1-\mu \right) f_{2}(z) \\
&=&z+\sum_{n=2}^{\infty }\left( \mu a_{n,1}+\left( 1-\mu \right)
a_{n,2}\right) z^{n}
\end{eqnarray*}%
Consider%
\begin{eqnarray*}
&&\sum_{n=2}^{\infty }\left \vert \frac{n-\left( 1+\sinh \left( e^{i\theta
}\right) \right) }{\sinh \left( e^{i\theta }\right) }\right \vert \left
\vert \mu a_{n,1}+\left( 1-\mu \right) a_{n,2}\right \vert \\
&\leq &\mu \sum_{n=2}^{\infty }\left \vert \frac{n-\left( 1+\sinh \left(
e^{i\theta }\right) \right) }{\sinh \left( e^{i\theta }\right) }\right \vert
\left \vert a_{n,1}\right \vert+\left( 1-\mu \right) \sum_{n=2}^{\infty }\left \vert \frac{n-\left(
1+\sinh \left( e^{i\theta }\right) \right) }{\sinh \left( e^{i\theta
}\right) }\right \vert \left \vert a_{n,2}\right \vert \\
&<&\mu +\left( 1-\mu \right) =1.
\end{eqnarray*}%
Thus by virtue of Theorem \ref{th1.1}, $\mu f_{1}(z)+\left( 1-\mu \right)
f_{2}(z)\in \mathcal{G}_{sh}.$
\end{proof}

%----------------------------------------------------------------------

\begin{theorem}
\label{Th1.1}Let $f\in \mathcal{G}_{sh}$ be of the form $\left( \ref%
{eq1}\right) $. Then
\begin{eqnarray*}
\left \vert a_{2}\right \vert &\leq &1, \\
\left \vert a_{3}\right \vert &\leq &\frac{1}{2}, \\
\left \vert a_{4}\right \vert &\leq &\frac{1}{3}, \\
\left \vert a_{5}\right \vert &\leq &\frac{1}{4}.
\end{eqnarray*}%
These inequalities are sharp respectively for
\begin{equation*}
f\left( z\right) =z\exp \int_{0}^{z}\frac{\sinh \left( t^{n-1}\right) }{t}dt%
\text{ for }n=2,3,4,5.
\end{equation*}
\end{theorem}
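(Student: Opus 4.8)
The plan is to convert the subordination into an identity for a Carath\'eodory function, read off $a_2,\dots,a_5$ as explicit polynomials in the coefficients $c_1,\dots,c_4$, and then apply the lemmas of Section~2, which are tailored to precisely these expressions.

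Since $f\in\mathcal{G}_{sh}$, there is a Schwarz function $\omega$ with $zf'(z)/f(z)-1=\sinh(\omega(z))$. I set $p=(1+\omega)/(1-\omega)=1+c_1z+c_2z^2+\cdots\in\mathcal{P}$, so that $\omega=(p-1)/(p+1)=\sum_{k\ge1}w_kz^k$ with $w_1=c_1/2$, $w_2=\tfrac12(c_2-\tfrac12c_1^2)$, and the standard (if lengthy) expressions for $w_3,w_4$. Expanding $\sinh(\omega)=\omega+\tfrac16\omega^3+\cdots$, comparing with the logarithmic-derivative expansion $zf'/f-1=z\frac{d}{dz}\log(f/z)$, and then exponentiating, I solve recursively for the coefficients:
\begin{equation*}
a_2=\frac{c_1}{2},\qquad a_3=\frac{c_2}{4},\qquad a_4=\frac16\left(c_3-\frac14c_1c_2+\frac{1}{24}c_1^3\right),
\end{equation*}
\begin{equation*}
a_5=\frac18\left(c_4-\frac13c_1c_3+\frac{5}{24}c_1^2c_2-\frac14c_2^2-\frac{5}{144}c_1^4\right).
\end{equation*}

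The first two bounds are then immediate from $|c_1|\le2$ and $|c_2|\le2$ (Lemma~\ref{L1}), giving $|a_2|\le1$ and $|a_3|\le\tfrac12$. For $a_4$ I would apply Lemma~\ref{Lj} with $a=\tfrac1{24}$, $b=\tfrac14$, $d=1$; the bound $2|a|+2|b-2a|+2|a-b+d|$ collapses to exactly $2$, whence $|a_4|\le\tfrac13$. For $a_5$ I would invoke Lemma~\ref{La}: since $|a_5|=\tfrac18|8a_5|$ and the sign is irrelevant inside the modulus, I match $-8a_5$ with $lc_1^4+rc_2^2+2mc_1c_3-\tfrac32nc_1^2c_2-c_4$, which forces $l=\tfrac{5}{144}$, $r=\tfrac14$, $m=\tfrac16$, $n=\tfrac{5}{36}$.

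The main obstacle is verifying the hypotheses of Lemma~\ref{La}. The conditions $0<m<1$ and $0<r<1$ are clear, but the quantitative inequality requires a careful evaluation of both sides. Computing the left-hand side with these constants yields $1015/93312$, while the right-hand side is $4m^2(1-m)^2r(1-r)=25/1728=1350/93312$, so the inequality holds with a comfortable margin, and therefore $|a_5|\le\tfrac14$. Finally, sharpness of all four estimates follows by taking $\omega(z)=z^{n-1}$, i.e. $f(z)=z\exp\int_0^z t^{-1}\sinh(t^{n-1})\,dt$ for $n=2,3,4,5$: here $w_{n-1}=1$ and all other $w_k=0$, so in each displayed formula only the leading coefficient survives and attains the stated value.
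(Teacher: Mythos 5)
Your proposal follows exactly the paper's route: the same coefficient formulas \eqref{coe1}--\eqref{coe4} in terms of the Carath\'eodory coefficients $c_1,\dots,c_4$, Lemma~\ref{L1} for $a_2,a_3$, Lemma~\ref{Lj} with $a=\tfrac1{24}$, $b=\tfrac14$, $d=1$ for $a_4$, and Lemma~\ref{La} with $l=\tfrac{5}{144}$, $r=\tfrac14$, $m=\tfrac16$, $n=\tfrac{5}{36}$ for $a_5$, together with the same extremal functions. The only difference is that you explicitly verify the quantitative hypothesis of Lemma~\ref{La} (your values $1015/93312\le 1350/93312$ check out), a step the paper leaves implicit; everything is correct.
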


\begin{proof}
Since $f\in \mathcal{G}_{sh},$ then there exists an analytic
function $s(z),$ $\left \vert s(z)\right \vert <1$ and $s\left( 0\right) =0,$
such that
\begin{equation}
\frac{zf^{\prime }(z)}{f(z)}=1+\sinh \left( s(z)\right) .\quad  \label{th1}
\end{equation}%
Denote
\begin{equation*}
\Psi \left( s(z)\right) =1+\sinh \left( s(z)\right)
\end{equation*}%
and
\begin{equation}
k(z)=1+c_{1}z+c_{2}z^{2}+\cdots =\frac{1+s(z)}{1-s(z)}.  \label{eq1.11}
\end{equation}%
Obviously, the function $k\in \mathcal{P}$ and $s(z)=\frac{k(z)-1}{k(z)+1%
}$. This gives%
\begin{align}
1+\sinh \left( \frac{k(z)-1}{k(z)+1}\right) =& 1+\frac{1}{2}
c_{1}z+\left( \frac{1}{2}c_{2}-\frac{1}{4}c_{1}^{2}\right) z^{2}+\left(
\frac{7}{48}c_{1}^{3}-\frac{1}{2}c_{2}c_{1}+\frac{1}{2}c_{3}\right)
z^{3} \nonumber\\
&+\left( -\frac{3}{32}c_{1}^{4}+\frac{7}{16}c_{1}^{2}c_{2}-\frac{1}{2}%
c_{3}c_{1}-\frac{1}{4}c_{2}^{2}+\frac{1}{2}c_{4}\right) \allowbreak
z^{4}++\cdots .  \label{sin}
\end{align}%
And other side,%
\begin{eqnarray}
\frac{zf^{\prime }(z)}{f(z)} &=&1+a_{2}z+\left( 2a_{3}-a_{2}^{2}\right)
z^{2}+\left( 3a_{4}-3a_{2}a_{3}+a_{2}^{3}\right) z^{3}  \notag \\
&&+\left( 4a_{5}-2a_{3}^{2}-4a_{2}a_{4}+4a_{2}^{2}a_{3}-a_{2}^{4}\right)
z^{4}+\cdots .  \label{star1.1}
\end{eqnarray}%
On equating coefficients of $\left( \ref{sin}\right)$ and $\left( \ref%
{star1.1}\right) $, we get%
\begin{eqnarray}
a_{2} &=&\frac{1}{2}c_{1},  \label{coe1} \\
a_{3} &=&\frac{1}{4}c_{2},  \label{coe2} \\
a_{4} &=&\frac{1}{144}c_{1}^{3}-\frac{1}{24}c_{2}c_{1}+\frac{1}{6}c_{3},
\label{coe3} \\
a_{5} &=&-\frac{5}{1152}c_{1}^{4}+\frac{5}{192}c_{1}^{2}c_{2}-\frac{1}{24}%
c_{1}c_{3}-\frac{1}{32}c_{2}^{2}+\frac{1}{8}c_{4}.  \label{coe4}
\end{eqnarray}%
Using $\left( \ref{1}\right)$ with equations $\left( \ref{coe1}\right) $ and
$\left( \ref{coe2}\right) ,$ we get$\allowbreak $
\begin{equation*}
\left \vert a_{2}\right \vert \leq 1\text{ and }\left \vert a_{3}\right
\vert \leq \frac{1}{2}.
\end{equation*}%
Application of Lemma \ref{Lj} to equation $\left( \ref{coe3}\right) ,$ we
get
\begin{equation*}
\left \vert a_{4}\right \vert \leq \frac{1}{3}.
\end{equation*}%
Application of Lemma \ref{La} to equation $\left( \ref{coe4}\right) ,$ we get%
\begin{equation*}
\left \vert a_{5}\right \vert \leq \frac{1}{4}.
\end{equation*}
\end{proof}

\begin{conjecture}
Let $f\in \mathcal{G}_{sh}$ be of the form $\left( \ref{eq1}%
\right) $. Then$\allowbreak $ $\allowbreak $%
\begin{equation*}
\left \vert a_{n}\right \vert \leq \frac{1}{n-1}\text{ for }n\geq 2.
\end{equation*}
\end{conjecture}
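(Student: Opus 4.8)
The plan is to pass from the subordination to the standard logarithmic representation and to run a strong induction on $n$. Writing $p(z)=zf'(z)/f(z)=1+\sum_{k\ge1}p_kz^k$, membership $f\in\mathcal{G}_{sh}$ means exactly $p\prec\phi$ with $\phi(z)=1+\sinh z$. Comparing coefficients in the identity $zf'(z)=f(z)p(z)$ yields the recurrence
\[
(n-1)a_n=\sum_{k=1}^{n-1}p_k\,a_{n-k},\qquad a_1=1,
\]
so the whole problem reduces to controlling the interaction between the subordinate coefficients $p_k$ and the already-estimated $a_{n-k}$. Setting $b_n=(n-1)a_n$, the target $|a_n|\le 1/(n-1)$ becomes $|b_n|\le1$, and the recurrence rewrites as $b_n=p_{n-1}+\sum_{k=1}^{n-2}\frac{p_k}{n-k-1}\,b_{n-k}$. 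The base cases $n\le5$ are supplied by Theorem~\ref{Th1.1} via \eqref{coe1}--\eqref{coe4}.

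Next I would record what is available on the $p_k$. Since $p=\phi\circ\omega=1+\sinh(\omega)$ for a Schwarz function $\omega(z)=\sum_{j\ge1}w_jz^j$, each $p_k$ is a universal polynomial in $w_1,\dots,w_k$ whose leading term is $w_k$, and one checks directly (as in the treatment of $a_2,a_3$) that $|p_1|,|p_2|\le1$. The naive attempt is to feed the most optimistic termwise bound $|p_k|\le1$, together with the inductive hypothesis $|b_{n-k}|\le1$, into the recurrence. This produces only
\[
|b_n|\le 1+\sum_{k=1}^{n-2}\frac{1}{n-k-1}=1+H_{n-2},
\]
a harmonic-number loss growing like $\log n$. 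Hence the crude triangle-inequality induction is far too lossy, and this is the heart of the matter: the bound $1/(n-1)$ is attained only by the single-mode choice $\omega(z)=\zeta z^{n-1}$ (equivalently $f(z)=z\exp\int_0^z t^{-1}\sinh(t^{n-1})\,dt$), for which a single coefficient $p_{n-1}=\zeta$ is extremal while all the others vanish to the relevant order, so that $|b_n|=1$ with no slack.

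The refined approach I would therefore pursue is to prove that the $p_k$ cannot all be simultaneously extremal: the subordination $p\prec\phi$ forces genuine trade-offs among $p_1,\dots,p_{n-1}$ that must be exploited jointly rather than termwise. Concretely, I would express $b_n$ as a functional of the Schur parameters of $\omega$ and seek a sharp bound on $\bigl|p_{n-1}+\sum_{k=1}^{n-2}\frac{p_k}{n-k-1}b_{n-k}\bigr|$ over the coefficient body of Schwarz functions, using the contractive constraint $\sum_j|w_j|^2\le1$ together with the Taylor structure of $\sinh$. This is precisely the kind of constrained extremal problem that the dedicated Lemmas~\ref{Lj} and~\ref{La} settle for the individual functionals defining $a_4$ and $a_5$; the task is to manufacture their uniform-in-$n$ analogues. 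An equivalent reformulation runs through the logarithmic coefficients $\gamma_n=p_n/(2n)$, recasting the claim as a sharp estimate on $|\gamma_n|$ and its nonlinear combinations.

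The step I expect to be the main obstacle — and the reason the statement is posed as a conjecture rather than a theorem — is exactly this joint control of the $p_k$. Because distinct coefficients are extremised by distinct functions (for instance $f_0$ of \eqref{extremalform} gives $a_4=7/72<1/3$, so no single function dominates coefficientwise), the argument cannot rest on coefficient domination by a fixed extremal, and the nested dependence of $b_n$ on all lower $b_{n-k}$ prevents the induction from closing under naive estimates. A complete proof would likely require either a L\"owner--de Branges type weighted argument tailored to $\phi=1+\sinh$, or the systematic generalisation of Lemmas~\ref{Lj}--\ref{La} to all orders, in each case yielding the sharp bound with equality forced by the single-mode Schwarz function $\omega(z)=\zeta z^{n-1}$.
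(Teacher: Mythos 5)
There is no proof to compare against here: the statement you were given is posed in the paper as a \emph{conjecture}, and the authors offer no argument for it beyond the verified cases $n=2,3,4,5$ of Theorem~\ref{Th1.1}. Your write-up correctly senses this, but as a proof it has a genuine and decisive gap: the entire substantive step is missing. What you actually establish is (i) the recurrence $(n-1)a_n=\sum_{k=1}^{n-1}p_k a_{n-k}$, which is correct, and (ii) the observation that feeding $|p_k|\le 1$ and the inductive hypothesis into it only yields $|b_n|\le 1+H_{n-2}$, so the induction does not close. Everything after that --- the ``joint control of the $p_1,\dots,p_{n-1}$,'' the ``uniform-in-$n$ analogues of Lemmas~\ref{Lj} and~\ref{La},'' the ``L\"owner--de Branges type weighted argument'' --- is a description of what a proof would need, not a proof. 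No bound on $|a_n|$ for any $n\ge 6$ is actually derived. A proposal that ends by naming the obstacle it cannot overcome is an honest research note, but it does not certify the statement, and you should not present it as a proof attempt that merely has a hard step; the hard step \emph{is} the theorem.

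Two smaller points. First, your numerical check is off: for $f_0$ of \eqref{extremalform} one has $a_4=2/9$ (as the paper itself records in the proof of Theorem~\ref{Th1.3}), not $7/72$; the qualitative conclusion that no single function is coefficientwise dominant survives, since $2/9<1/3$. Second, your identification of the candidate extremal $\omega(z)=\zeta z^{n-1}$, i.e.\ $f(z)=z\exp\int_0^z t^{-1}\sinh(t^{n-1})\,dt$, matches the sharpness functions the paper exhibits for $n=2,\dots,5$, so the conjectured bound and its conjectured extremals are consistent with the paper; this part of your analysis is sound and is the right starting point if you intend to attack the conjecture seriously.
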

As a consequence of above theorem, we have following remark:
\begin{remark}
Let $f\in \mathcal{G}_{sh}$ be of the form $\left( %
\ref{eq1}\right) $. Then for $\lambda\in\mathbb{C}$ $\allowbreak $ $\allowbreak $%
\begin{eqnarray*}
\left \vert a_{3}-\lambda a_{2}^{2}\right \vert &\leq &\frac{1}{2}\max \left
\{ 1,\left \vert 2\lambda -1\right \vert \right \}   \label{c1}
\end{eqnarray*}%
and%
\begin{equation*}
\left \vert a_{4}-a_{2}a_{3}\right \vert \leq \frac{1}{3}.  \label{c3}
\end{equation*}
\end{remark}
For $\lambda =1,$ the above remark gave:

\begin{corollary}
\label{c1.1}Let $f\in \mathcal{G}_{sh}$ be of the form $%
\left( \ref{eq1}\right) $. Then%
\begin{equation*}
\left \vert a_{3}-a_{2}^{2}\right \vert \leq \frac{1}{2}.
\end{equation*}%
The result is sharp.
\end{corollary}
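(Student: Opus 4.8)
The plan is to derive this directly from the coefficient representations already established in Theorem~\ref{Th1.1}. From equations \eqref{coe1} and \eqref{coe2} we have $a_2=\tfrac{1}{2}c_1$ and $a_3=\tfrac{1}{4}c_2$, where $c_1,c_2$ are the coefficients of the associated Carath\'eodory function $k\in\mathcal{P}$ defined in \eqref{eq1.11}. The key observation is that the quantity $a_3-a_2^2$ collapses to a single Fekete--Szeg\"o expression in $k$:
\begin{equation*}
a_3-a_2^2=\frac{1}{4}c_2-\frac{1}{4}c_1^2=\frac{1}{4}\bigl(c_2-c_1^2\bigr).
\end{equation*}
First I would record this identity, so that the estimate for $|a_3-a_2^2|$ reduces entirely to bounding $|c_2-c_1^2|$.

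Next I would apply inequality \eqref{lc} of Lemma~\ref{L1} with the parameter choice $\lambda=1$. This gives
\begin{equation*}
\bigl|c_2-c_1^2\bigr|\leq 2\max\{1,|2\cdot 1-1|\}=2\max\{1,1\}=2,
\end{equation*}
and hence $|a_3-a_2^2|=\tfrac{1}{4}|c_2-c_1^2|\leq\tfrac{1}{2}$, which is the desired bound. (Equivalently, this is just the stated Remark specialized to $\lambda=1$, since there the maximum equals $1$.) There is no genuine obstacle in this direction; the whole content is the linearizing identity above together with the known Fekete--Szeg\"o bound for functions in $\mathcal{P}$.

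The only step requiring a little care is sharpness. To exhibit equality I would choose the Schwarz function $s(z)=z^2$, so that $\tfrac{zf'(z)}{f(z)}=1+\sinh(z^2)$; in terms of $k(z)=\tfrac{1+s(z)}{1-s(z)}=\tfrac{1+z^2}{1-z^2}$ this corresponds to $c_1=0$ and $c_2=2$, which indeed saturates $|c_2-c_1^2|=2$. The resulting extremal function is
\begin{equation*}
f(z)=z\exp\int_0^z\frac{\sinh(t^2)}{t}\,dt,
\end{equation*}
i.e. the $n=3$ member of the family appearing in Theorem~\ref{Th1.1}. For this $f$ one has $a_2=\tfrac{1}{2}c_1=0$ and $a_3=\tfrac{1}{4}c_2=\tfrac{1}{2}$, so $|a_3-a_2^2|=\tfrac{1}{2}$, confirming that the bound cannot be improved. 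I expect the verification that this function actually lies in $\mathcal{G}_{sh}$ (immediate, since $s(z)=z^2$ is a Schwarz function) and the bookkeeping of $c_1,c_2$ to be the most error-prone part, but it is routine rather than difficult.
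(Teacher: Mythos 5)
Your proposal is correct and follows essentially the same route as the paper: the identity $a_3-a_2^2=\tfrac14(c_2-c_1^2)$ combined with inequality \eqref{lc} of Lemma~\ref{L1} at $\lambda=1$ is exactly how the paper obtains the preceding Remark, of which this corollary is the case $\lambda=1$. Your explicit sharpness verification via $f(z)=z\exp\int_0^z\sinh(t^2)t^{-1}\,dt$ (so $a_2=0$, $a_3=\tfrac12$) is a welcome addition, since the paper asserts sharpness without exhibiting the extremal function here.
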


\begin{theorem}
\label{Th1.3}Let $f\in \mathcal{G}_{sh}$ be of the form $%
\left( \ref{eq1}\right) $. Then
\begin{equation*}
\left \vert a_{2}a_{4}-a_{3}^{2}\right \vert \leq \frac{1}{36}.
\end{equation*}%
The result is sharp.
\end{theorem}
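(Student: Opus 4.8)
The plan is to express the determinant entirely through the Carathéodory coefficients $c_1,c_2,c_3$ of the auxiliary function $k\in\mathcal{P}$ introduced in the proof of Theorem~\ref{Th1.1}, and then to optimise using the representation of Lemma~\ref{Lc}. Substituting \eqref{coe1}, \eqref{coe2}, \eqref{coe3} into $a_2a_4-a_3^2$ and multiplying out, one obtains
\[
a_2a_4-a_3^2=\frac{1}{288}c_1^4-\frac{1}{48}c_1^2c_2+\frac{1}{12}c_1c_3-\frac{1}{16}c_2^2 .
\]
Since $\mathcal{P}$ is invariant under the rotation $k(z)\mapsto k(e^{-i\varphi}z)$, I may assume $c_1=:c\in[0,2]$.

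Next I would insert the substitutions of Lemma~\ref{Lc}, namely $2c_2=c^2+x(4-c^2)$ together with the companion expression for $4c_3$, where $|x|\le1$ and $|z|\le1$. Collecting terms and writing $t:=4-c^2$, the contributions in $c^2tx$ cancel, and the only occurrence of the free parameter $z$ is linear, through the single summand $\tfrac{1}{24}\,ct(1-|x|^2)z$. Applying the triangle inequality, setting $\mu:=|x|\in[0,1]$ and $|z|=1$, reduces the problem to maximising the real two-variable function
\[
F(c,\mu)=\frac{c^4}{576}+\frac{c^2t}{48}\mu^2+\frac{ct}{24}(1-\mu^2)+\frac{t^2}{64}\mu^2,\qquad t=4-c^2,
\]
over the rectangle $[0,2]\times[0,1]$.

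The heart of the argument, and the step I expect to be the genuine obstacle, is this maximisation: I would compare the values of $F$ on the four edges with the interior stationary points of $\partial_cF=\partial_\mu F=0$. The anticipated extremal is the function $f_0$ of \eqref{extremalform}, for which $s(z)=z$ gives $c_1=c_2=c_3=2$, hence $c=2$, $t=0$ and $F\equiv\tfrac{1}{36}$ along that edge; correspondingly $a_2=1$, $a_3=\tfrac12$, $a_4=\tfrac29$ give $a_2a_4-a_3^2=-\tfrac1{36}$, which would supply the sharpness claim.

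I must flag, however, that the opposite edge is delicate and is precisely where the difficulty concentrates: at $c=0$ one has $t=4$ and $F(0,\mu)=\tfrac14\mu^2$, so $F(0,1)=\tfrac14$. This corner corresponds to $a_2=0$ with $|c_2|=2$ (realised for instance by $s(z)=z^2$, which gives $a_3=\tfrac12$ and $|a_2a_4-a_3^2|=\tfrac14$), so the naive triangle-inequality bound does \emph{not} remain below $\tfrac1{36}$ on the whole rectangle. Reconciling the $c=0$ configuration with the target value is therefore the crucial point any proof of the stated constant must confront, and it suggests that either a substantially finer estimate is required or the claimed bound should be re-examined.
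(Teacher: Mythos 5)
Your route is the same one the paper takes: pass to the Carath\'eodory coefficients via \eqref{coe1}--\eqref{coe3}, invoke Lemma \ref{Lc} with $c_{1}=c\in[0,2]$, apply the triangle inequality, and maximise a two--variable majorant over $[0,2]\times[0,1]$. Your algebra is in fact more careful than the paper's: the coefficient of $c^{2}tx$ is $-\tfrac{1}{96}+\tfrac{1}{24}-\tfrac{1}{32}=0$, so that cross term really does cancel and your $F(c,\mu)$ is the correct majorant, whereas the paper's $\psi(c,y)$ carries a spurious summand $6c^{2}y\left(4-c^{2}\right)$. At $y=1$ both reduce to a function of $c$ that equals $\tfrac{1}{4}$ at $c=0$ and $\tfrac{1}{36}$ at $c=2$.

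The obstacle you flag at the end is therefore not a defect of your write--up; it is a genuine error in the paper. The paper's own $\chi$ satisfies $\chi(0)=\tfrac{1}{288}\cdot\tfrac{9}{2}\cdot 16=\tfrac{1}{4}$, yet the proof concludes that the maximum over $[0,2]$ occurs at $c=2$ solely because $\chi''(2)<0$; that only certifies a local maximum and ignores both the endpoint $c=0$ and the interior critical point of $\chi'(c)=\tfrac{1}{288}\left(24c-28c^{3}\right)$ near $c\approx 0.93$, where $\chi\approx 0.268$. More decisively, your $c=0$ configuration is an actual counterexample to the statement: taking the Schwarz function $\omega(z)=z^{2}$, the function $f(z)=z\exp\int_{0}^{z}t^{-1}\sinh\left(t^{2}\right)dt=z+\tfrac{1}{2}z^{3}+\cdots$ lies in $\mathcal{G}_{sh}$ and has $a_{2}=a_{4}=0$, $a_{3}=\tfrac{1}{2}$, so $\left\vert a_{2}a_{4}-a_{3}^{2}\right\vert=\tfrac{1}{4}>\tfrac{1}{36}$. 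Hence the theorem as printed is false; the bound this method actually yields (and which is attained) is $\tfrac{1}{4}$, at $c=0$, $\mu=1$, while $\tfrac{1}{36}$ is merely the value of the functional on the particular function $f_{0}$ used for the sharpness claim. The only thing missing from your proposal is the confidence to state this conclusion outright.
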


\begin{proof}
Since from $\left( \ref{coe1}\right) $, $\left( \ref{coe2}\right) $ and $%
\left( \ref{coe3}\right) $, we have
\begin{equation*}
\left \vert a_{2}a_{4}-a_{3}^{2}\right \vert =\left \vert \frac{1}{288}%
c_{1}^{4}-\frac{1}{48}c_{2}c_{1}^{2}+\frac{1}{12}c_{3}c_{1}-\frac{1}{16}%
c_{2}^{2}\right \vert .
\end{equation*}%
Using Lemma \ref{Lc}, also put $c_{1}=c\in \left[ 0,2\right], $  without loss of generality assume $|x|=y\in %
\left[ 0,1\right] $ and eliminating $z$ using triangular inequality, we get%
\begin{equation*}
\left \vert a_{2
}a_{4}-a_{3}^{2}\right \vert \leq  \psi \left( c,y\right),
\end{equation*}%
where $$\psi \left( c,y\right):=\frac{1}{288}\left(\frac{1%
}{2}c^{4}+6c^{2}y\left( 4-c^{2}\right) +6c^{2}y^{2}\left( 4-c^{2}\right)
+12c\left( 4-c^{2}\right) \left( 1-y^{2}\right) +\frac{9}{2}y^{2}\left(
4-c^{2}\right) ^{2}\right).$$
Now differentiating $\psi \left( c,y\right) $ with respect to $y$, we have
\begin{equation*}
\frac{\partial \psi \left( c,y\right) }{\partial y}=\frac{1}{288}\left(6c^{2}\left(
4-c^{2}\right) +12c^{2}y\left( 4-c^{2}\right) -24c\left( 4-c^{2}\right)
y+9y\left( 4-c^{2}\right) ^{2}\right),
\end{equation*}%
since $\dfrac{\partial \psi \left( c,y\right) }{\partial y}>0,$ thus $\psi(c,y)$ is an increasing
function and maximum occur at $y=1,$ so
\begin{equation*}
\psi \left( c,1\right) =\chi \left( c\right) =\frac{1}{288}\left( \frac{1}{2}%
c^{4}+12c^{2}\left( 4-c^{2}\right) +\frac{9}{2}\left( 4-c^{2}\right) ^{2}%
\right) ,
\end{equation*}%
differentiating $\chi \left( c\right) $ with respect to $c,$ we have
\begin{eqnarray*}
\chi ^{{\prime }}\left( c\right) &=&\frac{1}{288}\left( 2c^{3}+24c\left(
4-c^{2}\right) -24c^{3}-18c\left( 4-c^{2}\right) \right) \\
\chi ^{{{\prime \prime }}}\left( c\right) &=&\frac{1}{288}\left(
24 -84c^2 \right),
\end{eqnarray*}%
hence $\chi ^{{{\prime \prime }}}\left( c\right) <0$ for $c=2,$ maxima
exists at $c=2,$ we have
\begin{equation*}
\left \vert a_{2}a_{4}-a_{3}^{2}\right \vert \leq \frac{1}{36}.
\end{equation*}%
This bound is sharp for function defined as follow:
\begin{equation*}
f\left( z\right) =z\exp \left( \int_{0}^{z}\frac{\sinh \left( t\right) }{t}%
dt\right) =z+z^{2}+\frac{1}{2}z^{3}+\frac{2}{9}z^{4}+\cdots ,
\end{equation*}
which concludes the proof.
\end{proof}

\begin{theorem}
Let $f\in \mathcal{G}_{sh}$ be of the form $\left( \ref{eq1}%
\right) $. Then
\begin{equation*}
\left \vert H_{3,1}\left( f\right) \right \vert \leq \frac{1}{4}\simeq
\allowbreak 0.25.
\end{equation*}
\end{theorem}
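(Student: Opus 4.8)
The plan is to exploit the determinantal expansion recorded in the Introduction,
$$H_{3,1}(f) = a_3(a_2 a_4 - a_3^2) - a_4(a_4 - a_2 a_3) + a_5(a_3 - a_2^2),$$
and to estimate each of the three summands by the triangle inequality, rather than substituting the full $c_i$-representations and optimizing. The key observation is that every factor occurring here has already been bounded in the preceding results: the estimates $|a_3| \leq \tfrac{1}{2}$, $|a_4| \leq \tfrac{1}{3}$ and $|a_5| \leq \tfrac{1}{4}$ come from Theorem \ref{Th1.1}; the second Hankel determinant satisfies $|a_2 a_4 - a_3^2| \leq \tfrac{1}{36}$ by Theorem \ref{Th1.3}; the bound $|a_4 - a_2 a_3| \leq \tfrac{1}{3}$ is the Remark preceding Corollary \ref{c1.1}; and $|a_3 - a_2^2| \leq \tfrac{1}{2}$ is Corollary \ref{c1.1} itself.

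With these in hand, I would simply write
$$|H_{3,1}(f)| \leq |a_3|\,|a_2 a_4 - a_3^2| + |a_4|\,|a_4 - a_2 a_3| + |a_5|\,|a_3 - a_2^2|$$
and insert the six bounds to obtain
$$|H_{3,1}(f)| \leq \frac{1}{2}\cdot\frac{1}{36} + \frac{1}{3}\cdot\frac{1}{3} + \frac{1}{4}\cdot\frac{1}{2} = \frac{1}{72} + \frac{8}{72} + \frac{9}{72} = \frac{1}{4}.$$
The point that makes this crude factor-by-factor route succeed is precisely that the three terms total $\tfrac{18}{72}$, so the estimate closes exactly at the target value $\tfrac{1}{4}$.

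There is no serious obstacle in this approach; the only thing to verify is that the three non-trivial ``building block'' inequalities $|a_2 a_4 - a_3^2| \leq \tfrac{1}{36}$, $|a_4 - a_2 a_3| \leq \tfrac{1}{3}$ and $|a_3 - a_2^2| \leq \tfrac{1}{2}$ are genuinely available as stated, since the coefficient bounds $|a_3|,|a_4|,|a_5|$ are immediate from Theorem \ref{Th1.1}. I would deliberately avoid the alternative route of inserting the formulas $a_2 = \tfrac{1}{2}c_1$, $a_3 = \tfrac{1}{4}c_2$, $a_4 = \tfrac{1}{144}c_1^3 - \tfrac{1}{24}c_1 c_2 + \tfrac{1}{6}c_3$ and $a_5 = -\tfrac{5}{1152}c_1^4 + \tfrac{5}{192}c_1^2 c_2 - \tfrac{1}{24}c_1 c_3 - \tfrac{1}{32}c_2^2 + \tfrac{1}{8}c_4$ directly into $H_{3,1}$ and then optimizing via Lemma \ref{Lc} over $c\in[0,2]$ and $|x|\leq 1$: although that would in principle yield the sharp constant, it reduces to a lengthy quartic-in-$c$ optimization of the type carried out in Theorem \ref{Th1.3}, whereas the triangle-inequality argument reaches $\tfrac{1}{4}$ with essentially no computation.
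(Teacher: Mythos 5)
Your proposal is correct and is essentially identical to the paper's own proof: the authors likewise expand $H_{3,1}(f)=a_3(a_2a_4-a_3^2)-a_4(a_4-a_2a_3)+a_5(a_3-a_2^2)$ and apply the triangle inequality together with the bounds from Theorem~\ref{Th1.1}, Theorem~\ref{Th1.3}, the Remark, and Corollary~\ref{c1.1}, yielding $\tfrac{1}{72}+\tfrac{1}{9}+\tfrac{1}{8}=\tfrac{1}{4}$. Your write-up is in fact more explicit than the paper's one-line justification.
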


\begin{proof}
Since we know that%
\begin{equation*}
H_{3,1}\left( f\right) =a_{3}\left( a_{4}a_{2}-a_{3}^{2}\right) -a_{4}\left(
a_{2}a_{3}-a_{4}\right) +a_{5}\left( a_{3}-a_{2}^{2}\right) ,
\end{equation*}%
using Theorem \ref{Th1.1} and \ref{Th1.3} and corollary \ref{c1.1}, along
with triangular inequality, we get the desired result.
\end{proof}

\section{\textsc{Differential Subordination}}

\begin{theorem}
\label{R1}For $-1\leq B<A\leq 1$ and $f \in \mathcal{A}$,
if the conditions
\begin{equation}
\left \vert \alpha \right \vert \geq \frac{A-B}{1+\cos 1-\sin 1-\left \vert
B\right \vert \left( 1+\sinh 1+\cosh 1\right) }  \label{a1}
\end{equation}%
\  and
\begin{equation*}
1+\alpha zf^{\prime }\left( z\right) \prec \frac{1+Az}{1+Bz},
\end{equation*}%
holds. Then%
\begin{equation*}
\frac{f\left( z\right) }{z}\prec 1+\sinh z.
\end{equation*}
\end{theorem}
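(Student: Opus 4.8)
The plan is to argue by contradiction using Jack's Lemma (Lemma \ref{R}), exactly as one does for coefficient-free subordination results with Janowski hypotheses. Write $p(z)=f(z)/z$, so that $p(0)=1$ and the target conclusion is $p\prec 1+\sinh z$. Since $\phi(z)=1+\sinh z$ is univalent near the origin with $\phi(0)=1$ and $\phi'(0)=1\neq0$, I can define an analytic function $w$ near $0$ by $p(z)=1+\sinh(w(z))$ with $w(0)=0$; proving $|w(z)|<1$ throughout $\mathbb{D}$ is equivalent to the desired subordination. Assuming this fails, there is a first radius on which $|w|$ reaches the value $1$, say at $z_0$ with $|w(z_0)|=1$, and Lemma \ref{R} supplies $z_0w'(z_0)=mw(z_0)$ with $m\ge1$. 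Put $w(z_0)=e^{i\theta}$.

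Next I would recast the hypothesis as a disc-membership statement. The Möbius map $(1+Az)/(1+Bz)$ is univalent and carries $\mathbb{D}$ onto the region $\{\,\omega:|\omega-1|<|A-B\omega|\,\}$, so $1+\alpha zf'(z)\prec(1+Az)/(1+Bz)$ says precisely that $1+\alpha z_0f'(z_0)$ lies there. I then seek a contradiction by establishing the reverse inequality $|\alpha z_0f'(z_0)|\ge|A-B(1+\alpha z_0f'(z_0))|$ at the Jack point. Differentiating $f(z)=z(1+\sinh(w(z)))$ gives $f'(z)=1+\sinh(w)+z\cosh(w)\,w'$, and the Jack relation replaces $z_0^2\cosh(w)w'$ by $m z_0 e^{i\theta}\cosh(e^{i\theta})$, yielding
\begin{equation*}
z_0f'(z_0)=z_0\bigl(1+\sinh(e^{i\theta})+m\,e^{i\theta}\cosh(e^{i\theta})\bigr),
\end{equation*}
so the whole problem reduces to a two-sided modulus estimate of $P(\theta,m):=1+\sinh(e^{i\theta})+m\,e^{i\theta}\cosh(e^{i\theta})$ on the unit circle.

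With this reduction the endgame is triangle-inequality bookkeeping. Writing $u=\alpha z_0f'(z_0)$, I bound the Janowski side by $|A-B-Bu|\le(A-B)+|B|\,|u|$, so the desired inequality $|u|\ge|A-B-Bu|$ follows once $|u|(1-|B|)\ge A-B$. Splitting $|u|=|\alpha|\,|z_0|\,|P(\theta,m)|$ and inserting a lower bound for $|P|$ in the leading term together with an upper bound for $|P|$ in the term carrying $|B|$ is what manufactures the stated threshold: the upper estimate $|P|\le 1+|\sinh(e^{i\theta})|+|\cosh(e^{i\theta})|\le 1+\sinh1+\cosh1=1+e$ (using $|\sinh(e^{i\theta})|\le\sinh1$ and $|\cosh(e^{i\theta})|\le\cosh1$ from the positive-coefficient power series) accounts for the factor $|B|(1+\sinh1+\cosh1)$, while a lower estimate of $|P|$ along the extremal direction $w=i$, where $\cosh(i)=\cos1$ and $\sinh(i)=i\sin1$, produces the numerator-controlling constant $1+\cos1-\sin1$. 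The hypothesis on $|\alpha|$ is exactly the statement $|\alpha|\bigl((1+\cos1-\sin1)-|B|(1+e)\bigr)\ge A-B$, which closes the contradiction.

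I expect the genuine difficulty to lie entirely in that modulus estimate of $P(\theta,m)$ and in making it uniform. Two points need care: the factor $m\ge1$ pushes the upper bound for $|P|$ above $1+e$, so the clean constant $1+e$ really corresponds to the boundary case $m=1$ and one must argue (or simply accept) that this is the controlling configuration; and the factor $|z_0|\le1$ sits on the "large" side of the inequality, so the estimate is effectively carried out at the extremal value $|z_0|=1$. Verifying the lower bound for $|P|$ over $\theta$ and $m\ge1$, and pinning down where $\cos1$ and $\sin1$ enter (namely at $w=i$, where $|\cosh(e^{i\theta})|$ is minimal), is the one step I would actually compute with care; everything else is formal once Jack's Lemma and the disc characterisation of Janowski subordination are in hand.
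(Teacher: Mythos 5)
Your plan follows the paper's proof essentially step for step: the same substitution $f(z)/z=1+\sinh w(z)$, the same application of Jack's Lemma at a boundary point $z_0$ with $w(z_0)=e^{i\theta}$, the same disc characterisation of the Janowski hypothesis via $\left\vert p-1\right\vert<\left\vert A-Bp\right\vert$, and the same extremal constants $\cos 1\leq\left\vert\cosh e^{i\theta}\right\vert\leq\cosh 1$ and $\sin 1\leq\left\vert\sinh e^{i\theta}\right\vert\leq\sinh 1$ located at $\theta=\pi/2$ and $\theta=0$. The one step you flag as needing careful verification --- the uniform lower bound $\left\vert 1+\sinh e^{i\theta}+m e^{i\theta}\cosh e^{i\theta}\right\vert\geq 1+m\cos 1-\sin 1$ --- is precisely the estimate the paper itself asserts by a bare triangle-inequality appeal, so your outline is aligned with (and no less rigorous than) the published argument.
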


\begin{proof}
Let us define a function%
\begin{equation}
p\left( z\right) =1+\alpha zf^{\prime }\left( z\right) ,  \label{j2}
\end{equation}%
also we consider%
\begin{equation}
\frac{f\left( z\right) }{z}=1+\sinh w(z).  \label{j1}
\end{equation}%
Now to prove our result, we only need to prove that $w(z)$ is a Schwarz function, that is $\left \vert
w\left( z\right) \right \vert <1$ for $|z|<1.$
Upon logarithmic differentiation of $\left( \ref{j1}\right) $ and using $%
\left( \ref{j2}\right) $ we obtain%
\begin{equation*}
p\left( z\right) =1+\alpha z\left( 1+\sinh w(z)+zw'(z)\cosh w(z)\right)
\end{equation*}%
and so%
\begin{eqnarray*}
\left \vert \frac{p\left( z\right) -1}{A-Bp\left( z\right) }\right \vert
&=&\left \vert \frac{\alpha \left( 1+\sinh w(z)+zw^{\prime }(z)\cosh
w(z)\right) }{A-B\left( 1+\alpha \left( 1+\sinh w(z)+zw^{\prime }(z)\cosh
w(z)\right) \right) }\right \vert \\
&=&\left \vert \frac{\alpha \left( 1+\sinh w(z)+zw^{\prime }(z)\cosh
w(z)\right) }{\left( A-B\right) -\alpha B\left( 1+\sinh w(z)+zw^{\prime
}(z)\cosh w(z)\right) }\right \vert.
\end{eqnarray*}%
Now if $w\left( z\right) $ attains its maximum value at some $z=z_{0}$ and $%
\left \vert w\left( z_{0}\right) \right \vert =1.$ Then by Lemma~\ref{R}  for $m\geq 1,$ we have, $\begin{array}{c}
z_{0}w^{\prime }\left( z_{0}\right) =mw\left( z_{0}\right)%
\end{array}
$ and $w\left( z_{0}\right) =e^{i\theta },$ for $\theta \in \left[ -\pi ,\pi %
\right],$ we get%
\begin{eqnarray*}
\left \vert \frac{p\left( z_{0}\right) -1}{A-Bp\left( z_{0}\right) }\right
\vert &=&\left \vert \frac{\alpha \left( 1+\sinh e^{i\theta }+mw\left(
z_{0}\right) \cosh e^{i\theta }\right) }{\left( A-B\right) -\alpha B\left(
1+\sinh e^{i\theta }+mw\left( z_{0}\right) \cosh e^{i\theta }\right) }\right
\vert \\
&\geq &\frac{\left \vert \alpha \right \vert \left( 1+m\left \vert \cosh
e^{i\theta }\right \vert -\left \vert \sinh e^{i\theta }\right \vert \right)
}{\left( A-B\right) +\left \vert \alpha \right \vert |B|\left( 1+\left \vert
\sinh e^{i\theta }\right \vert +m\left \vert \cosh e^{i\theta }\right \vert
\right) }.
\end{eqnarray*}%
Since
\begin{equation*}
\left \vert \cosh e^{i\theta }\right \vert ^{2}=\cosh ^{2}\left( \cos \theta
\right) \cos ^{2}\left( \sin \theta \right) +\sinh ^{2}\left( \cos \theta
\right) \sin ^{2}\left( \sin \theta \right) =\Psi \left( \theta \right)
\end{equation*}%
\begin{equation*}
\left \vert \sinh e^{i\theta }\right \vert ^{2}=\sinh ^{2}\left( \cos \theta
\right) \cos ^{2}\left( \sin \theta \right) +\cosh ^{2}\left( \cos \theta
\right) \sin ^{2}\left( \sin \theta \right) =\Theta \left( \theta \right),
\end{equation*}%
one can see that, if we let $\Psi ^{\prime }\left(
\theta \right) =0$ and $\Theta ^{\prime }\left( \theta \right) =0$ has the
roots $\theta =0,\pm \pi ,\pm \frac{\pi }{2}$ in $\left[ -\pi ,\pi \right] ,$
also $\Psi \left( \theta \right) $ and $\Theta \left( \theta \right) $ are
even functions in this interval so%
\begin{equation*}
\max \left \{ \Psi \left( \theta \right) \right \} =\Psi \left( 0\right)
=\Psi \left( \pi \right) =\cosh ^{2}\left( 1\right) ,
\end{equation*}%
\begin{equation*}
\min \left \{ \Psi \left( \theta \right) \right \} =\Psi \left( \frac{\pi }{2%
}\right) =\cos ^{2}\left( 1\right) ,
\end{equation*}%
\begin{equation*}
\max \left \{ \Theta \left( \theta \right) \right \} =\Theta \left( 0\right)
=\Theta \left( \pi \right) =\sinh ^{2}\left( 1\right) ,
\end{equation*}%
\begin{equation*}
\min \left \{ \Theta \left( \theta \right) \right \} =\Theta \left( \frac{%
\pi }{2}\right) =\sin ^{2}\left( 1\right) .
\end{equation*}%
From these, we obtain%
\begin{equation}
\cos \left( 1\right) \leq \left \vert \cosh e^{i\theta }\right \vert \leq
\cosh \left( 1\right) ,  \label{new1}
\end{equation}%
\begin{equation}
\sin \left( 1\right) \leq \left \vert \sinh e^{i\theta }\right \vert \leq
\sinh \left( 1\right) .  \label{new2}
\end{equation}%
Now we use $\left( \ref{new1}\right) $ and $\left( \ref{new2}\right) $ to
obtain%
\begin{equation*}
\left \vert \frac{p\left( z_{0}\right) -1}{A-Bp\left( z_{0}\right) }\right
\vert \geq \frac{\left \vert \alpha \right \vert \left( 1+m\cos \left(
1\right) -\sin \left( 1\right) \right) }{\left( A-B\right) +\left \vert
\alpha \right \vert |B|\left( 1+\sinh \left( 1\right) +m\cosh \left( 1\right)
\right) }.
\end{equation*}%
Let%
\begin{equation*}
\phi \left( m\right) =\frac{\left \vert \alpha \right \vert \left( 1+m\cos
\left( 1\right) -\sin \left( 1\right) \right) }{\left( A-B\right) +\left
\vert \alpha \right \vert |B|\left( 1+\sinh \left( 1\right) +m\cosh \left(
1\right) \right) },
\end{equation*}
which implies
\begin{equation*}
 \phi ^{\prime }\left( m\right) =\frac{\left \vert \alpha \right
\vert \cos \left( 1\right) \left( A-B\right) +\left \vert \alpha \right
\vert ^{2}B\left( \cos \left( 1\right) \left( 1+\sinh \left( 1\right)
\right) -\cosh \left( 1\right) \left( 1-\sin \left( 1\right) \right) \right)
}{\left( \left( A-B\right) +\left \vert \alpha \right \vert |B|\left( 1+\sinh
\left( 1\right) +m\cosh \left( 1\right) \right) \right) ^{2}}>0,
\end{equation*}
which shows that $\phi \left( m\right) $ is an increasing function and hence
it will have its minimum value at $m=1$ and so
\begin{equation*}
\left \vert \frac{p\left( z_{0}\right) -1}{A-Bp\left( z_{0}\right) }\right
\vert \geq \frac{\left \vert \alpha \right \vert \left( 1+\cos \left(
1\right) -\sin \left( 1\right) \right) }{\left( A-B\right) +\left \vert
\alpha \right \vert |B|\left( 1+\sinh \left( 1\right) +\cosh \left( 1\right)
\right) }
\end{equation*}%
now by$\left( \ref{a1}\right),$\ we have%
\begin{equation*}
\left \vert \frac{p\left( z_{0}\right) -1}{A-Bp\left( z_{0}\right) }\right
\vert \geq 1,
\end{equation*}%
which contradicts the fact that $%
\begin{array}{c}
p\left( z\right) \prec (1+Az)/(1+Bz)%
\end{array}$% and so $\left \vert w\left( z\right) \right \vert <1$ and hence from
and hence we get the desired result.
\end{proof}

\begin{corollary}
For $-1\leq B<A\leq 1,$ and $g \in \mathcal{A}$, then if the
following conditions
\begin{equation*}
\left \vert \alpha \right \vert \geq \frac{A-B}{1+\cos 1-\sin 1-\left \vert
B\right \vert \left( 1+\sinh 1+\cosh 1\right) }
\end{equation*}%
\ and
\begin{equation}
1+\frac{\alpha z^{2}g'\left( z\right) }{g\left( z\right) }\left( 2+\frac{%
zg^{\prime \prime }\left( z\right) }{g^{\prime }\left( z\right) }-\frac{%
zg^{\prime }\left( z\right) }{g\left( z\right) }\right) \prec \frac{1+Az}{%
1+Bz},  \label{i1}
\end{equation}%
holds, then $g \in \mathcal{G}_{sh}.$
\end{corollary}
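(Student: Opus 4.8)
The plan is to deduce this corollary directly from Theorem~\ref{R1} by an appropriate substitution, so that no new estimates are needed. The key observation is that, by the definition of $\mathcal{G}_{sh}$, the conclusion $g\in\mathcal{G}_{sh}$ is precisely the subordination $zg'(z)/g(z)\prec 1+\sinh z$. Comparing this with the conclusion $f(z)/z\prec 1+\sinh z$ of Theorem~\ref{R1}, I would set
\[
f(z):=\frac{z^{2}g'(z)}{g(z)},
\]
so that $f(z)/z=zg'(z)/g(z)$ is exactly the quantity we wish to subordinate to $1+\sinh z$.

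First I would verify that $f\in\mathcal{A}$. Writing $g(z)=z+\sum_{n\ge 2}b_{n}z^{n}$, we have $g'(z)=1+O(z)$ and $g(z)=z\bigl(1+O(z)\bigr)$, whence $f(z)=z^{2}g'(z)/g(z)=z+O(z^{2})$; thus $f$ carries the normalization \eqref{eq1}. Next I would translate hypothesis \eqref{i1} into the hypothesis of Theorem~\ref{R1}. Logarithmic differentiation of $f(z)=z^{2}g'(z)/g(z)$ gives
\[
\frac{f'(z)}{f(z)}=\frac{2}{z}+\frac{g''(z)}{g'(z)}-\frac{g'(z)}{g(z)},
\]
so that
\[
zf'(z)=f(z)\left(2+\frac{zg''(z)}{g'(z)}-\frac{zg'(z)}{g(z)}\right)
=\frac{z^{2}g'(z)}{g(z)}\left(2+\frac{zg''(z)}{g'(z)}-\frac{zg'(z)}{g(z)}\right).
\]
Consequently $1+\alpha z f'(z)$ coincides term for term with the left-hand side of \eqref{i1}, and the subordination assumed there is exactly $1+\alpha z f'(z)\prec (1+Az)/(1+Bz)$.

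Finally, since the bound assumed on $|\alpha|$ is identical to \eqref{a1}, Theorem~\ref{R1} applies to this $f$ and yields $f(z)/z\prec 1+\sinh z$. Because $f(z)/z=zg'(z)/g(z)$, this reads $zg'(z)/g(z)-1\prec \sinh z$, which is precisely $g\in\mathcal{G}_{sh}$, completing the argument.

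I do not anticipate a genuine obstacle: the whole proof reduces to the above substitution together with a single logarithmic differentiation. The only point requiring a word of care is confirming that the chosen $f$ is analytic on $\mathbb{D}$, equivalently that the quotients $z^{2}g'(z)/g(z)$ and $zg''(z)/g'(z)$ appearing in \eqref{i1} are well defined there; this is already implicit in the very statement of the hypothesis \eqref{i1}, and under it the verification $f\in\mathcal{A}$ above is immediate.
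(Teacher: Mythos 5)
Your proposal is correct and is essentially identical to the paper's own proof: the paper likewise sets $l(z)=z^{2}g'(z)/g(z)$, computes $zl'(z)=\frac{z^{2}g'(z)}{g(z)}\left(2+\frac{zg''(z)}{g'(z)}-\frac{zg'(z)}{g(z)}\right)$ so that \eqref{i1} becomes $1+\alpha z l'(z)\prec(1+Az)/(1+Bz)$, and then invokes Theorem~\ref{R1}. Your added check that the substituted function lies in $\mathcal{A}$ is a small refinement the paper omits.
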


\begin{proof}
Take
\begin{equation*}
l\left( z\right) =\frac{z^{2}g^{\prime }\left( z\right) }{g\left( z\right) },
\end{equation*}%
then we have%
\begin{equation*}
zl^{\prime }\left( z\right) =\frac{z^{2}g'\left( z\right) }{g\left( z\right) }%
\left( 2+\frac{zg^{\prime \prime }\left( z\right) }{g^{\prime }\left(
z\right) }-\frac{zg^{\prime }\left( z\right) }{g\left( z\right) }\right)
\end{equation*}%
and so by $\left( \ref{i1}\right), $ we get
\begin{equation*}
1+\alpha zl^{\prime }\left( z\right) \prec \frac{1+Az}{1+Bz}
\end{equation*}%
and hence by Theorem \ref{R1}, we get
\begin{equation*}
\frac{l\left( z\right) }{z}=\frac{zg^{\prime }\left( z\right) }{g\left(
z\right) }\prec 1+\sinh z,
\end{equation*}%
thus $g \in \mathcal{G}_{sh
}.$
\end{proof}

\begin{theorem} \label{t11}
For $-1\leq B<A\leq 1$ and $f \in \mathcal{A}$, then if the
conditions
\begin{equation}
\left \vert \alpha \right \vert \geq \frac{\left( A-B\right) \left( 1+\sinh
1\right) }{1+\cos 1-\sin 1-B\left( 1+\cosh 1+\sinh 1\right) }  \label{a2}
\end{equation}%
\  and%
\begin{equation}
1+\alpha \frac{zf^{\prime }\left( z\right) }{f\left( z\right) }\prec \frac{%
1+Az}{1+Bz},  \label{j3}
\end{equation}%
holds, then%
\begin{equation*}
\frac{f\left( z\right) }{z}\prec 1+\sinh z.
\end{equation*}
\end{theorem}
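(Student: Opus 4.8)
The plan is to follow the same Jack's-lemma / extremal-point argument used in the proof of Theorem~\ref{R1}, adapting it to the logarithmic derivative $zf'(z)/f(z)$ in place of $zf'(z)$. Set $p(z)=1+\alpha\,\dfrac{zf'(z)}{f(z)}$ and write $\dfrac{f(z)}{z}=1+\sinh w(z)$, where $w$ is analytic with $w(0)=0$; it suffices to prove that $w$ is a Schwarz function, since then $f(z)/z\prec 1+\sinh z$. Logarithmic differentiation of $f(z)=z\bigl(1+\sinh w(z)\bigr)$ gives $\dfrac{zf'(z)}{f(z)}=1+\dfrac{z w'(z)\cosh w(z)}{1+\sinh w(z)}$, so that $p(z)=1+\alpha\Bigl(1+\dfrac{zw'(z)\cosh w(z)}{1+\sinh w(z)}\Bigr)$.

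Next I would argue by contradiction. If $w$ is not a Schwarz function, then by Lemma~\ref{R} there is a point $z_0$ with $|w(z_0)|=1$, $w(z_0)=e^{i\theta}$ and $z_0w'(z_0)=mw(z_0)$ for some $m\ge 1$. Substituting these into $p(z_0)$ and clearing the common denominator $1+\sinh e^{i\theta}$, one obtains
\[
\left|\frac{p(z_0)-1}{A-Bp(z_0)}\right|=\left|\frac{\alpha\bigl(1+\sinh e^{i\theta}+m e^{i\theta}\cosh e^{i\theta}\bigr)}{(A-B)\bigl(1+\sinh e^{i\theta}\bigr)-\alpha B\bigl(1+\sinh e^{i\theta}+m e^{i\theta}\cosh e^{i\theta}\bigr)}\right|.
\]
From here I would bound below exactly as in Theorem~\ref{R1}, invoking the already-established estimates \eqref{new1} and \eqref{new2}, namely $\cos1\le|\cosh e^{i\theta}|\le\cosh1$ and $\sin1\le|\sinh e^{i\theta}|\le\sinh1$, together with the additional bound $|1+\sinh e^{i\theta}|\le 1+\sinh1$ needed to control the new denominator factor. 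This reduces the right-hand side to $\phi(m):=\dfrac{|\alpha|(1+m\cos1-\sin1)}{(A-B)(1+\sinh1)+|\alpha||B|(1+\sinh1+m\cosh1)}$.

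Finally, I would verify $\phi'(m)>0$, so that $\phi$ is increasing and its minimum over $m\ge1$ is attained at $m=1$; evaluating there and imposing the hypothesis \eqref{a2} yields $\bigl|(p(z_0)-1)/(A-Bp(z_0))\bigr|\ge1$, which contradicts $p\prec(1+Az)/(1+Bz)$. Hence $w$ is a Schwarz function and the conclusion follows. The one genuinely new point compared with Theorem~\ref{R1} is the extra factor $1/(1+\sinh e^{i\theta})$ produced by differentiating $zf'/f$ rather than $zf'$: it is precisely the upper bound $|1+\sinh e^{i\theta}|\le 1+\sinh1$ that promotes the $(A-B)$ appearing in the threshold to $(A-B)(1+\sinh1)$. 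I therefore expect the main care to lie in tracking this factor through the common-denominator step so that the resulting bound on $|\alpha|$ matches \eqref{a2} exactly; the remaining estimates are a routine repetition of the earlier argument.
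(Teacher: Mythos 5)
Your proposal follows the paper's proof essentially verbatim: the same substitution $p(z)=1+\alpha zf'(z)/f(z)$ with $f(z)/z=1+\sinh w(z)$, the same application of Jack's lemma (Lemma~\ref{R}) at an extremal point, the same lower bound via \eqref{new1}--\eqref{new2} together with $|1+\sinh e^{i\theta}|\le 1+\sinh 1$, and the same monotonicity argument for $\phi(m)$ leading to a contradiction with \eqref{j3}. No meaningful differences from the paper's own argument.
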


\begin{proof}
Let%
\begin{equation*}
p\left( z\right) =1+\alpha \frac{zf^{\prime }\left( z\right) }{f\left(
z\right)}.
\end{equation*}%
and
\begin{equation*}
\frac{f\left( z\right) }{z}=1+\sinh w\left( z\right),
\end{equation*} then we have to show that $|w(z)|<1$ for $|z|<1$. Now
using simple calculations, we obtain that%
\begin{equation*}
p\left( z\right) =1+\alpha \frac{1+zw^{\prime }\left( z\right) \cosh w\left(
z\right) +\sinh w\left( z\right) }{1+\sinh w\left( z\right) }
\end{equation*}%
and so%
\begin{equation*}
\left \vert \frac{p\left( z\right) -1}{A-Bp\left( z\right) }\right \vert
=\left \vert \frac{\alpha \left( 1+zw^{\prime }\left( z\right) \cosh w\left(
z\right) +\sinh w\left( z\right) \right) }{\left( A-B\right) \left( 1+\sinh
w\left( z\right) \right) -B\alpha \left( 1+zw^{\prime }\left( z\right) \cosh
w\left( z\right) +\sinh w\left( z\right) \right) }\right \vert.
\end{equation*}%
On the contrary if $w\left( z\right) $ attains its maximum value at some $z=z_{0}$ and $%
\left \vert w\left( z_{0}\right) \right \vert =1.$ Then by Lemma \ref{R} for $m\geq 1$, we have$,$ $%
\begin{array}{c}
z_{0}w^{\prime }\left( z_{0}\right) =mw\left( z_{0}\right),%
\end{array}%
$ so we have%
\begin{eqnarray*}
\left \vert \frac{p\left( z_{0}\right) -1}{A-Bp\left( z_{0}\right) }\right
\vert &=&\left \vert \frac{\alpha \left( 1+mw\left( z_{0}\right) \cosh
e^{i\theta }+\sinh e^{i\theta }\right) }{\left( A-B\right) \left( 1+\sinh
e^{i\theta }\right) -B\alpha \left( 1+mw\left( z_{0}\right) \cosh e^{i\theta
}+\sinh e^{i\theta }\right) }\right \vert \\
&\geq &\frac{\left \vert \alpha \right \vert \left( 1+m\left \vert \cosh
e^{i\theta }\right \vert -\left \vert \sinh e^{i\theta }\right \vert \right)
}{\left( A-B\right) \left( 1+\left \vert \sinh e^{i\theta }\right \vert
\right) +|B|\left \vert \alpha \right \vert \left( 1+m\left \vert \cosh
e^{i\theta }\right \vert +\left \vert \sinh e^{i\theta }\right \vert \right)
}.
\end{eqnarray*}%
Now with the help of $\left( \ref{new1}\right) $ and $\left( \ref{new2}%
\right) $%
\begin{equation*}
\left \vert \frac{p\left( z_{0}\right) -1}{A-Bp\left( z_{0}\right) }\right
\vert \geq \frac{\left \vert \alpha \right \vert \left( 1+m\cos \left(
1\right) -\sin \left( 1\right) \right) }{\left( A-B\right) \left( 1+\sinh
\left( 1\right) \right) +|B|\left \vert \alpha \right \vert \left( 1+m\cosh
\left( 1\right) +\sinh \left( 1\right) \right) }.
\end{equation*}%
Now if
\begin{equation*}
\phi \left( m\right) =\frac{\left \vert \alpha \right \vert \left( 1+m\cos
\left( 1\right) -\sin \left( 1\right) \right) }{\left( A-B\right) \left(
1+\sinh \left( 1\right) \right) +|B|\left \vert \alpha \right \vert \left(
1+m\cosh \left( 1\right) +\sinh \left( 1\right) \right) }
\end{equation*}%
then
\begin{equation*}\phi ^{\prime }\left( m\right) = \frac{2\left( A-B\right) \left
\vert \alpha \right \vert \cos 1 \left( \sinh 1+1\right) +|B||\alpha|^2
\left( \cos 1\left( 1+\sinh 1\right) +\cosh 1\left(
\sin 1-1\right) \right) }{\left( \left( A-B\right) \left( 1+\sinh \left(
1\right) \right) +|B|\left \vert \alpha \right \vert \left( 1+m\cosh \left(
1\right) +\sinh \left( 1\right) \right) \right)^{2}} > 0,
\end{equation*}%
$\allowbreak \allowbreak $which shows that $\phi \left( m\right) $ is an
increasing function and hence it will have its minimum value at $m=1$ and so
\begin{equation*}
\left \vert \frac{p\left( z_{0}\right) -1}{A-Bp\left( z_{0}\right) }\right
\vert \geq \frac{\left \vert \alpha \right \vert \left( 1+\cos \left(
1\right) -\sin \left( 1\right) \right) }{\left( A-B\right) \left( 1+\sinh
\left( 1\right) \right) +B\left \vert \alpha \right \vert \left( 1+\cosh
\left( 1\right) +\sinh \left( 1\right) \right) }
\end{equation*}%
now by$\left( \ref{a2}\right), $\ we have%
\begin{equation*}
\left \vert \frac{p\left( z_{0}\right) -1}{A-Bp\left( z_{0}\right) }\right
\vert \geq 1,
\end{equation*}%
which contradicts $\left( \ref{j3}\right)$ and so $\left \vert w\left(
z\right) \right \vert <1$ for $|z|<1$, which completes the  proof.
\end{proof}
We obtain the following result using Theorem~\ref{t11}
\begin{corollary}
For $-1\leq B<A\leq 1,$ and $f \in \mathcal{A}_{p}$ then if
the condition
\begin{equation*}
\left \vert \alpha \right \vert \geq \frac{\left( A-B\right) \left( 1+\sinh
1\right) }{1+\cos 1-\sin 1-B\left( 1+\cosh 1+\sinh 1\right) },
\end{equation*}%
\ and
\begin{equation*}
1+\alpha \left( 2+\frac{zg^{\prime \prime }\left( z\right) }{g^{\prime
}\left( z\right) }-\frac{zg^{\prime }\left( z\right) }{g\left( z\right) }%
\right) \prec \frac{1+Az}{1+Bz},
\end{equation*}%
holds then $g \in \mathcal{G}_{sh}.$
\end{corollary}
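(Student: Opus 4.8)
The plan is to reduce this corollary to Theorem~\ref{t11} by the same change of variable that was used in the corollary following Theorem~\ref{R1}. Concretely, I would introduce the auxiliary function
\begin{equation*}
l(z)=\frac{z^{2}g^{\prime}(z)}{g(z)},
\end{equation*}
whose logarithmic derivative is engineered to reproduce the bracketed expression appearing in the subordination hypothesis. (I would first reconcile the notation, since the statement writes $f\in\mathcal{A}_{p}$ while the hypothesis and conclusion involve $g$; I would work throughout with $g\in\mathcal{A}$.)

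The key computation is the logarithmic differentiation of $l$. Since $\log l(z)=2\log z+\log g^{\prime}(z)-\log g(z)$, I would obtain
\begin{equation*}
\frac{zl^{\prime}(z)}{l(z)}=2+\frac{zg^{\prime\prime}(z)}{g^{\prime}(z)}-\frac{zg^{\prime}(z)}{g(z)},
\end{equation*}
so that the subordination hypothesis of the corollary becomes exactly
\begin{equation*}
1+\alpha\frac{zl^{\prime}(z)}{l(z)}\prec\frac{1+Az}{1+Bz}.
\end{equation*}
This is precisely condition $(\ref{j3})$ of Theorem~\ref{t11} with $f$ replaced by $l$, and the bound on $|\alpha|$ in the corollary is identical to $(\ref{a2})$. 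Applying Theorem~\ref{t11} to the function $l$ then yields $l(z)/z\prec 1+\sinh z$. Finally, observing that
\begin{equation*}
\frac{l(z)}{z}=\frac{zg^{\prime}(z)}{g(z)},
\end{equation*}
I conclude $zg^{\prime}(z)/g(z)\prec 1+\sinh z$, equivalently $zg^{\prime}(z)/g(z)-1\prec\sinh z$, which is exactly the membership $g\in\mathcal{G}_{sh}$.

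There is essentially no genuine obstacle: the argument transports Theorem~\ref{t11} through the map $g\mapsto l$ and mirrors the corollary to Theorem~\ref{R1} almost line for line, the sole difference being that there $l(z)=z^{2}g^{\prime}(z)/g(z)$ was fed into the $1+\alpha zl^{\prime}(z)$ hypothesis of Theorem~\ref{R1}, whereas here it is fed into the $1+\alpha\,zl^{\prime}(z)/l(z)$ hypothesis of Theorem~\ref{t11}. The only points requiring (routine) care are verifying the logarithmic-derivative identity displayed above and checking that $l$ is suitably normalized so that Theorem~\ref{t11} is applicable.
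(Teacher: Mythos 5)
Your proposal is correct and matches the paper's (implicit) argument: the paper states this corollary as a direct consequence of Theorem~\ref{t11} via the same substitution $l(z)=z^{2}g^{\prime}(z)/g(z)$ used in the corollary to Theorem~\ref{R1}, and your logarithmic-derivative identity $zl^{\prime}(z)/l(z)=2+zg^{\prime\prime}(z)/g^{\prime}(z)-zg^{\prime}(z)/g(z)$ is exactly the reduction needed to land on hypothesis $(\ref{j3})$ with bound $(\ref{a2})$. The normalization check you flag is indeed routine, since $z^{2}g^{\prime}(z)/g(z)=z+a_{2}z^{2}+\cdots$ for $g\in\mathcal{A}$ with $g(z)/z\neq 0$.
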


\begin{theorem}
For $-1\leq B<A\leq 1$ and $f \in \mathcal{M}_{p}$ then if
the condition
\begin{equation}
\left \vert \alpha \right \vert \geq \frac{\left( A-B\right) \left( 1+\sinh
\left( 1\right) \right) ^{2}}{1+\cos \left( 1\right) -\sin \left( 1\right)
-B\left( 1+\cosh \left( 1\right) +\sinh \left( 1\right) \right) }  \label{a3}
\end{equation}%
\ is true and
\begin{equation}
1+\alpha \frac{z^{2}f^{\prime }\left( z\right) }{\left( f\left( z\right)
\right) ^{2}}\prec \frac{1+Az}{1+Bz},  \label{j4}
\end{equation}%
then%
\begin{equation*}
\frac{f\left( z\right) }{z}\prec \sqrt{1+z}.
\end{equation*}
\end{theorem}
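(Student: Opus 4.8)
The plan is to run the same maximum-principle argument via Jack's Lemma (Lemma~\ref{R}) that drives Theorems~\ref{R1} and~\ref{t11}. I introduce
\begin{equation*}
p(z)=1+\alpha\,\frac{z^{2}f'(z)}{(f(z))^{2}},\qquad \frac{f(z)}{z}=\sqrt{1+w(z)},
\end{equation*}
with $w$ analytic and $w(0)=0$; the theorem follows once I show $|w(z)|<1$ on $\mathbb{D}$. Writing $f(z)=z\sqrt{1+w(z)}$ and differentiating logarithmically gives $zf'(z)/f(z)=1+zw'(z)/(2(1+w(z)))$, and since $z/f(z)=1/\sqrt{1+w(z)}$, multiplying the two yields the working identity
\begin{equation*}
p(z)-1=\frac{\alpha}{\sqrt{1+w(z)}}\left(1+\frac{zw'(z)}{2(1+w(z))}\right).
\end{equation*}

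Next I argue by contradiction. If $|w|$ does not stay below $1$, it attains $|w(z_{0})|=1$ at a first point $z_{0}$, and Lemma~\ref{R} lets me write $w(z_{0})=e^{i\theta}$ and $z_{0}w'(z_{0})=m\,e^{i\theta}$ with $m\geq1$. Substituting into the identity and clearing the $\sqrt{1+e^{i\theta}}$ factor gives
\begin{equation*}
\left|\frac{p(z_{0})-1}{A-Bp(z_{0})}\right|
=\left|\frac{\alpha\big(1+\tfrac{m e^{i\theta}}{2(1+e^{i\theta})}\big)}{(A-B)\sqrt{1+e^{i\theta}}-B\alpha\big(1+\tfrac{m e^{i\theta}}{2(1+e^{i\theta})}\big)}\right|,
\end{equation*}
and I would bound this modulus from below by the triangle inequality (decreasing the numerator, increasing the denominator) and then reduce to a one-variable function $\phi(m)$ of the Jack multiplier. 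As in the earlier proofs I expect $\phi'(m)>0$ for $-1\le B<A\le1$, so the lower bound is minimized at $m=1$; evaluating at $m=1$ and invoking the hypothesis \eqref{a3} should give $|(p(z_{0})-1)/(A-Bp(z_{0}))|\ge1$, contradicting \eqref{j4} and forcing $|w|<1$.

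The hard part is the boundary estimation, which is genuinely more delicate here than in Theorems~\ref{R1} and~\ref{t11}. There the relevant quantities were $|\cosh e^{i\theta}|$ and $|\sinh e^{i\theta}|$, controlled uniformly by \eqref{new1}--\eqref{new2}. For the square-root target the analogous quantities are $|1+e^{i\theta}|=2|\cos(\theta/2)|$ and $|1+e^{i\theta}|^{1/2}$, and the factor $1/\sqrt{1+e^{i\theta}}$ is unbounded as $\theta\to\pm\pi$ (where $1+e^{i\theta}\to0$), so uniform estimates require restricting the range of $\theta$, using that $\sqrt{1+w(z)}$ must remain in the image of $\mathbb{D}$ under $z\mapsto\sqrt{1+z}$. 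The crux is then to reconcile the resulting bound with the hyperbolic form of \eqref{a3}, whose constants $\sinh1,\cosh1,\sin1,\cos1$ descend directly from \eqref{new1}--\eqref{new2}: one must either recover these constants from the square-root estimates or recalibrate the threshold on $|\alpha|$ so that $\phi(1)\ge1$ still holds. Once the correct boundary bounds feed into $\phi(1)$, the inequality $|(p(z_{0})-1)/(A-Bp(z_{0}))|\ge1$ closes the contradiction and delivers $f(z)/z\prec\sqrt{1+z}$.
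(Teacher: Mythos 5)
Your skeleton (Jack's Lemma, argue by contradiction at a first point with $|w(z_0)|=1$, reduce to a monotone function $\phi(m)$ of the Jack multiplier) is indeed the paper's skeleton, and your identity $p(z)-1=\frac{\alpha}{\sqrt{1+w(z)}}\bigl(1+\frac{zw'(z)}{2(1+w(z))}\bigr)$ is correct for the ansatz $f(z)/z=\sqrt{1+w(z)}$. But the proof is not finished: the decisive lower-bound estimate is only described (``should give'', ``one must either recover these constants \dots or recalibrate''), never carried out, and as you set it up it cannot be carried out. The threshold \eqref{a3} --- in particular the factor $(1+\sinh 1)^2$ and the constants $\sin 1,\cos 1,\sinh 1,\cosh 1$ --- is calibrated to the hyperbolic boundary estimates \eqref{new1}--\eqref{new2}, and nothing in the square-root computation produces those quantities: with $w(z_0)=e^{i\theta}$ the relevant moduli are $|1+e^{i\theta}|=2|\cos(\theta/2)|$ and its square root, which range over $[0,2]$ and degenerate as $\theta\to\pm\pi$, so any honest lower bound for $\bigl|\frac{p(z_0)-1}{A-Bp(z_0)}\bigr|$ in your display yields a constant different from \eqref{a3}. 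So there is a genuine gap, and it is exactly where you flagged it.

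The paper resolves the mismatch in the opposite direction from the one you contemplated: its proof sets $f(z)/z=1+\sinh w(z)$, not $\sqrt{1+w(z)}$ --- that is, the printed conclusion $f(z)/z\prec\sqrt{1+z}$ is evidently a misprint for $f(z)/z\prec 1+\sinh z$, as confirmed by the corollary that follows (which deduces $g\in\mathcal{G}_{sh}$ from this theorem) and by the companion cube-power theorem whose threshold carries $(1+\sinh 1)^3$. With that ansatz one computes $p(z)=1+\alpha\,(1+zw'(z)\cosh w(z)+\sinh w(z))/(1+\sinh w(z))^2$; the $(1+\sinh w)^2$ in the denominator is bounded via \eqref{new2} by $(1+\sinh 1)^2$, which is precisely where the squared factor in \eqref{a3} comes from, and the rest closes verbatim as in Theorem~\ref{t11}: the triangle-inequality lower bound, $\phi'(m)>0$ so the minimum is at $m=1$, then \eqref{a3} forces $\bigl|\frac{p(z_0)-1}{A-Bp(z_0)}\bigr|\geq 1$, contradicting \eqref{j4}. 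Your instinct that the hyperbolic constants could not descend from square-root estimates was exactly right; the missing step was to conclude that the subordination target, not the threshold on $|\alpha|$, is what should be corrected.
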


\begin{proof}
Let%
\begin{equation*}
p\left( z\right) =1+\alpha \frac{z^{2}f^{\prime }\left( z\right) }{\left(
f\left( z\right) \right) ^{2}}
\end{equation*}%
and
\begin{equation*}
\frac{f\left( z\right) }{z}=1+\sinh w\left( z\right),
\end{equation*}
then it is to show that $|w|<1$ for $|z|<1$. Also
by simplification, we have%
\begin{equation*}
p\left( z\right) =1+\alpha \frac{1+zw^{\prime }\left( z\right) \cosh w\left(
z\right) +\sinh w\left( z\right) }{\left( 1+\sinh w\left( z\right) \right)
^{2}}
\end{equation*}%
and so%
\begin{equation*}
\left \vert \frac{p\left( z\right) -1}{A-Bp\left( z\right) }\right \vert
=\left \vert \frac{\alpha \left( 1+zw^{\prime }\left( z\right) \cosh w\left(
z\right) +\sinh w\left( z\right) \right) }{\left( A-B\right) \left( 1+\sinh
w\left( z\right) \right) ^{2}-B\alpha \left( 1+zw^{\prime }\left( z\right)
\cosh w\left( z\right) +\sinh w\left( z\right) \right) }\right \vert .
\end{equation*}%
Now if $w\left( z\right) $ attains its maximum value at some $z=z_{0}$ and $%
\left \vert w\left( z_{0}\right) \right \vert =1.$ Then by Lemma \ref{R} for $m\geq 1,$ we have, $%
\begin{array}{c}
z_{0}w^{\prime }\left( z_{0}\right) =mw\left( z_{0}\right).%
\end{array}%
$ So we have%
\begin{eqnarray*}
\left \vert \frac{p\left( z_{0}\right) -1}{A-Bp\left( z_{0}\right) }\right
\vert &=&\left \vert \frac{\alpha \left( 1+mw\left( z_{0}\right) \cosh
e^{i\theta }+\sinh e^{i\theta }\right) }{\left( A-B\right) \left( 1+\sinh
e^{i\theta }\right) ^{2}-B\alpha \left( 1+mw\left( z_{0}\right) \cosh
e^{i\theta }+\sinh e^{i\theta }\right) }\right \vert \\
&\geq &\frac{\left \vert \alpha \right \vert \left( 1+m\left \vert \cosh
e^{i\theta }\right \vert -\left \vert \sinh e^{i\theta }\right \vert \right)
}{\left( A-B\right) \left( 1+\left \vert \sinh e^{i\theta }\right \vert
\right) ^{2}+|B|\left \vert \alpha \right \vert \left( 1+m\left \vert \cosh
e^{i\theta }\right \vert +\left \vert \sinh e^{i\theta }\right \vert \right)
}.
\end{eqnarray*}%
Now using $\left( \ref{new1}\right) $ and $\left( \ref{new2}%
\right) $%
\begin{equation*}
\left \vert \frac{p\left( z_{0}\right) -1}{A-Bp\left( z_{0}\right) }\right
\vert \geq \frac{\left \vert \alpha \right \vert \left( 1+m\cos \left(
1\right) -\sin \left( 1\right) \right) }{\left( A-B\right) \left( 1+\sinh
\left( 1\right) \right) ^{2}+|B|\left \vert \alpha \right \vert \left(
1+m\cosh \left( 1\right) +\sinh \left( 1\right) \right) }.
\end{equation*}%
Let%
\begin{equation*}
\phi \left( m\right) =\frac{\left \vert \alpha \right \vert \left( 1+m\cos
\left( 1\right) -\sin \left( 1\right) \right) }{\left( A-B\right) \left(
1+\sinh \left( 1\right) \right) ^{2}+|B|\left \vert \alpha \right \vert \left(
1+m\cosh \left( 1\right) +\sinh \left( 1\right) \right) }
\end{equation*}%
which implies
\begin{equation*}
 \phi ^{\prime }\left( m\right) =\frac{\left( A-B\right) \left
\vert \alpha \right \vert \cos 1\left( 1+\sinh \left( 1\right) \right)
^{2}+|B|\left \vert \alpha \right \vert ^{2}\left( \cos 1\left( 1+\sinh
1\right) +\sin 1\cosh 1-\cosh 1\right) }{\left( \left( A-B\right) \left(
1+\sinh \left( 1\right) \right) ^{2}+|B|\left \vert \alpha \right \vert \left(
1+m\cosh \left( 1\right) +\sinh \left( 1\right) \right) \right) ^{2}}>0,
\end{equation*}%
$\allowbreak \allowbreak $which shows that $\phi \left( m\right) $ is an
increasing function and hence it will have its minimum value at $m=1$ and so
\begin{equation*}
\left \vert \frac{p\left( z_{0}\right) -1}{A-Bp\left( z_{0}\right) }\right
\vert \geq \frac{\left \vert \alpha \right \vert \left( 1+\cos \left(
1\right) -\sin \left( 1\right) \right) }{\left( A-B\right) \left( 1+\sinh
\left( 1\right) \right) ^{2}+|B|\left \vert \alpha \right \vert \left( 1+\cosh
\left( 1\right) +\sinh \left( 1\right) \right)}.
\end{equation*}%
Now by$\left( \ref{a3}\right) $\ we have%
\begin{equation*}
\left \vert \frac{p\left( z_{0}\right) -1}{A-Bp\left( z_{0}\right) }\right
\vert \geq 1,
\end{equation*}%
which contradicts $\left( \ref{j4}\right) ,$ thus $\left \vert w\left(
z\right) \right \vert <1$ for $|z|<1$, which yields the desired result.
\end{proof}

\begin{corollary}
For $-1\leq B<A\leq 1,$ and $g\in \mathcal{A}_{p}$ then if
the condition
\begin{equation*}
\left \vert \alpha \right \vert \geq \frac{\left( A-B\right) \left( 1+\sinh
\left( 1\right) \right) ^{2}}{1+\cos \left( 1\right) -\sin \left( 1\right)
-B\left( 1+\cosh \left( 1\right) +\sinh \left( 1\right) \right) },
\end{equation*}%
\ and
\begin{equation*}
1+\frac{\alpha g\left( z\right) }{zg^{\prime }\left( z\right) }\left( 2+%
\frac{zg^{\prime \prime }\left( z\right) }{g^{\prime }\left( z\right) }-%
\frac{zg^{\prime }\left( z\right) }{g\left( z\right) }\right) \prec \frac{%
1+Az}{1+Bz},
\end{equation*}%
holds then $g \in \mathcal{G}_{sh}.$
\end{corollary}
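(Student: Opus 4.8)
The plan is to mimic the reduction used in the two preceding corollaries, introducing an auxiliary function that recasts the hypothesis into the exact form required by the immediately preceding theorem (the one stated for the class $\mathcal{M}_{p}$). I would set
\[
l(z)=\frac{z^{2}g^{\prime }(z)}{g(z)},
\]
which is precisely the substitution that appeared in the first corollary of this section. The aim is to show that the bracketed differential expression in the hypothesis equals $z^{2}l^{\prime }(z)/\left( l(z)\right) ^{2}$, so that the theorem applies to $l$ verbatim.

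First I would take the logarithmic derivative of $l$. Since $\log l(z)=2\log z+\log g^{\prime }(z)-\log g(z)$, differentiating and multiplying by $z$ yields
\[
\frac{zl^{\prime }(z)}{l(z)}=2+\frac{zg^{\prime \prime }(z)}{g^{\prime }(z)}-\frac{zg^{\prime }(z)}{g(z)},
\]
the identity already recorded in the first corollary. I would then form the quotient demanded by the theorem,
\[
\frac{z^{2}l^{\prime }(z)}{\left( l(z)\right) ^{2}}=\frac{z}{l(z)}\cdot \frac{zl^{\prime }(z)}{l(z)}=\frac{g(z)}{zg^{\prime }(z)}\left( 2+\frac{zg^{\prime \prime }(z)}{g^{\prime }(z)}-\frac{zg^{\prime }(z)}{g(z)}\right),
\]
using $z/l(z)=g(z)/\left( zg^{\prime }(z)\right)$. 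Hence the subordination hypothesis is literally $1+\alpha z^{2}l^{\prime }(z)/\left( l(z)\right) ^{2}\prec (1+Az)/(1+Bz)$, and the bound imposed on $\left\vert \alpha \right\vert$ is identical to the one in the preceding theorem.

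With these identifications in hand I would invoke the preceding theorem with $f$ replaced by $l$; its conclusion gives $l(z)/z\prec 1+\sinh z$ (the form $f(z)/z=1+\sinh w(z)$ used in that theorem's proof makes clear that this, rather than $\sqrt{1+z}$, is the intended right-hand side). Finally, since $l(z)/z=zg^{\prime }(z)/g(z)$, this reads $zg^{\prime }(z)/g(z)-1\prec \sinh z$, which is exactly the statement $g\in \mathcal{G}_{sh}$.

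I expect the only substantive point, beyond the routine algebra above, to be confirming that the preceding theorem genuinely applies to $l$: one must check that $l$ belongs to the relevant class $\mathcal{M}_{p}$ and that the logarithmic-derivative manipulations are legitimate, i.e.\ that $g$, $g^{\prime }$, and hence $l$, do not vanish where needed so that $z/l(z)$ and the displayed quotient are well defined on $\mathbb{D}$. Once membership of $l$ is granted, the corollary is an immediate specialization of the theorem and requires no further estimates.
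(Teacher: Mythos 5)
Your proposal is correct and follows exactly the route the paper intends: the substitution $l(z)=z^{2}g^{\prime}(z)/g(z)$ is the same one used in the first corollary of this section, the identity $\frac{z^{2}l^{\prime}(z)}{(l(z))^{2}}=\frac{g(z)}{zg^{\prime}(z)}\left(2+\frac{zg^{\prime\prime}(z)}{g^{\prime}(z)}-\frac{zg^{\prime}(z)}{g(z)}\right)$ checks out, and applying the preceding theorem to $l$ gives $l(z)/z=zg^{\prime}(z)/g(z)\prec 1+\sinh z$, i.e.\ $g\in\mathcal{G}_{sh}$ (the paper itself states this corollary without proof). You also rightly note that the conclusion $\sqrt{1+z}$ in the preceding theorem is a typo for $1+\sinh z$, as its proof makes clear.
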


\begin{theorem}
For $-1\leq B<A\leq 1$ and $f \in \mathcal{M}_{p}$ then if
the condition
\begin{equation}
\left \vert \alpha \right \vert \geq \frac{\left( A-B\right) \left( 1+\sinh
\left( 1\right) \right) ^{3}}{1+\cos \left( 1\right) -\sin \left( 1\right)
-B\left( 1+\cosh \left( 1\right) +\sinh \left( 1\right) \right) },
\label{a4}
\end{equation}%
\ holds and
\begin{equation}
1+\alpha \frac{z^{3}f^{\prime }\left( z\right) }{\left( f\left( z\right)
\right) ^{3}}\prec \frac{1+Az}{1+Bz},\label{j5}
\end{equation}%
then%
\begin{equation*}
\frac{f\left( z\right) }{z}\prec 1+\sinh w\left( z\right) .
\end{equation*}
\end{theorem}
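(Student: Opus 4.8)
The plan is to reuse verbatim the contradiction-via-Jack's-Lemma scheme already deployed in Theorem~\ref{R1}, Theorem~\ref{t11} and the preceding result for $z^{2}f'(z)/(f(z))^{2}$. The only structural difference is that the cubic power in \eqref{j5} yields a factor $(1+\sinh w(z))^{3}$ in the denominator of the relevant quotient, which is exactly what the cube $(1+\sinh 1)^{3}$ in hypothesis \eqref{a4} is tailored to cancel. I read the stated conclusion as $f(z)/z\prec 1+\sinh z$, following the pattern of the earlier theorems.

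First I would set $p(z)=1+\alpha z^{3}f'(z)/(f(z))^{3}$ and define $w$ by $f(z)/z=1+\sinh w(z)$; since $f(z)/z=1+a_{2}z+\cdots$ and $\sinh$ has an analytic local inverse at the origin, $w$ is analytic with $w(0)=0$, and establishing $|w(z)|<1$ on $\mathbb{D}$ is equivalent to the desired subordination. Differentiating $f(z)=z(1+\sinh w(z))$ gives $f'(z)=1+\sinh w(z)+zw'(z)\cosh w(z)$, and dividing by $(f(z))^{3}=z^{3}(1+\sinh w(z))^{3}$ produces
$$p(z)=1+\alpha\,\frac{1+zw'(z)\cosh w(z)+\sinh w(z)}{(1+\sinh w(z))^{3}}.$$

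Next, arguing by contradiction, I would suppose $|w|$ attains the value $1$ at some $z_{0}$ and apply Lemma~\ref{R} to write $z_{0}w'(z_{0})=mw(z_{0})$ with $m\geq1$ and $w(z_{0})=e^{i\theta}$. Substituting into $\left|(p(z_{0})-1)/(A-Bp(z_{0}))\right|$ and applying the triangle inequality exactly as in the earlier proofs gives
$$\left|\frac{p(z_{0})-1}{A-Bp(z_{0})}\right|\geq\frac{|\alpha|\,(1+m|\cosh e^{i\theta}|-|\sinh e^{i\theta}|)}{(A-B)(1+|\sinh e^{i\theta}|)^{3}+|B||\alpha|(1+m|\cosh e^{i\theta}|+|\sinh e^{i\theta}|)}.$$
The estimates \eqref{new1} and \eqref{new2} already proved in the excerpt then let me replace $|\cosh e^{i\theta}|$ by $\cos 1$ and $|\sinh e^{i\theta}|$ by $\sin 1$ in the numerator, and by $\cosh 1$ and $\sinh 1$ in the denominator, producing a lower bound $\phi(m)$.

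Finally, I would compute $\phi'(m)$; the terms linear in $m$ cancel, leaving the numerator
$$|\alpha|(A-B)\cos 1\,(1+\sinh 1)^{3}+|\alpha|^{2}|B|\bigl(\cos 1\,(1+\sinh 1)-\cosh 1\,(1-\sin 1)\bigr),$$
so that $\phi'(m)>0$ and $\phi$ attains its minimum over $m\geq1$ at $m=1$. Hypothesis \eqref{a4} is precisely the rearrangement of $\phi(1)\geq1$, which forces $\left|(p(z_{0})-1)/(A-Bp(z_{0}))\right|\geq1$ and contradicts \eqref{j5}. The only genuine obstacle is the routine sign verification for $\phi'(m)$, which hinges on the numerical inequality $\cos 1\,(1+\sinh 1)-\cosh 1\,(1-\sin 1)>0$---the same positive constant underlying all the earlier denominators; beyond that the argument is a line-by-line transcription of the previous proofs with the square replaced by a cube.
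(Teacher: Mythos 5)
Your proposal reproduces the paper's own proof essentially line by line: the same substitutions $p(z)=1+\alpha z^{3}f'(z)/(f(z))^{3}$ and $f(z)/z=1+\sinh w(z)$, the same application of Jack's Lemma at a point where $|w|$ reaches $1$, the same use of the bounds \eqref{new1}--\eqref{new2}, the monotonicity of $\phi(m)$, and the contradiction with \eqref{j5} via \eqref{a4}. The only difference is that your numerator for $\phi'(m)$ carries the correct factor $(1+\sinh 1)^{3}$ where the paper's displayed formula has $(1+\sinh 1)^{2}$ (an evident typo carried over from the preceding theorem), which does not affect the positivity conclusion.
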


\begin{proof}
Let us assume%
\begin{equation*}
p\left( z\right) =1+\alpha \frac{z^{3}f^{\prime }\left( z\right) }{\left(
f\left( z\right) \right) ^{3}}
\end{equation*}%
and
\begin{equation*}
\frac{f\left( z\right) }{z}=1+\sinh w\left( z\right),
\end{equation*}
then, we need to show that $|w|<1$ for $|z|<1$.
Also by rearrangement, we get%
\begin{equation*}
p\left( z\right) =1+\alpha \frac{1+zw^{\prime }\left( z\right) \cosh w\left(
z\right) +\sinh w\left( z\right) }{\left( 1+\sinh w\left( z\right) \right)
^{3}}
\end{equation*}%
and so%
\begin{equation*}
\left \vert \frac{p\left( z\right) -1}{A-Bp\left( z\right) }\right \vert
=\left \vert \frac{\alpha \left( 1+zw^{\prime }\left( z\right) \cosh w\left(
z\right) +\sinh w\left( z\right) \right) }{\left( A-B\right) \left( 1+\sinh
w\left( z\right) \right) ^{3}-B\alpha \left( 1+zw^{\prime }\left( z\right)
\cosh w\left( z\right) +\sinh w\left( z\right) \right) }\right \vert .
\end{equation*}%
Now if $w\left( z\right) $ attains its maximum value at some $z=z_{0}$ and $%
\left \vert w\left( z_{0}\right) \right \vert =1.$ Then by Lemma  \ref%
{R} for $m\geq 1,$ we have$,$ $%
\begin{array}{c}
z_{0}w^{\prime }\left( z_{0}\right) =mw\left( z_{0}\right).%
\end{array}%
$So we have%
\begin{eqnarray*}
\left \vert \frac{p\left( z_{0}\right) -1}{A-Bp\left( z_{0}\right) }\right
\vert &=&\left \vert \frac{\alpha \left( 1+mw\left( z_{0}\right) \cosh
e^{i\theta }+\sinh e^{i\theta }\right) }{\left( A-B\right) \left( 1+\sinh
e^{i\theta }\right) ^{3}-B\alpha \left( 1+mw\left( z_{0}\right) \cosh
e^{i\theta }+\sinh e^{i\theta }\right) }\right \vert \\
&\geq &\frac{\left \vert \alpha \right \vert \left( 1+m\left \vert \cosh
e^{i\theta }\right \vert -\left \vert \sinh e^{i\theta }\right \vert \right)
}{\left( A-B\right) \left( 1+\left \vert \sinh e^{i\theta }\right \vert
\right) ^{3}+|B|\left \vert \alpha \right \vert \left( 1+m\left \vert \cosh
e^{i\theta }\right \vert +\left \vert \sinh e^{i\theta }\right \vert \right)
}.
\end{eqnarray*}%
Now with the help of $\left( \ref{new1}\right) $ and $\left( \ref{new2}%
\right) $%
\begin{equation*}
\left \vert \frac{p\left( z_{0}\right) -1}{A-Bp\left( z_{0}\right) }\right
\vert \geq \frac{\left \vert \alpha \right \vert \left( 1+m\cos \left(
1\right) -\sin \left( 1\right) \right) }{\left( A-B\right) \left( 1+\sinh
\left( 1\right) \right) ^{3}+|B|\left \vert \alpha \right \vert \left(
1+m\cosh \left( 1\right) +\sinh \left( 1\right) \right) }.
\end{equation*}%
Suppose%
\begin{equation*}
\phi \left( m\right) =\frac{\left \vert \alpha \right \vert \left( 1+m\cos
\left( 1\right) -\sin \left( 1\right) \right) }{\left( A-B\right) \left(
1+\sinh \left( 1\right) \right) ^{3}+|B|\left \vert \alpha \right \vert \left(
1+m\cosh \left( 1\right) +\sinh \left( 1\right) \right) },
\end{equation*}%
that implies
\begin{equation*}\small
\phi ^{\prime }\left( m\right) =\frac{\left( A-B\right) \left
\vert \alpha \right \vert \cos 1\left( 1+\sinh \left( 1\right) \right)
^{2}+|B|\left \vert \alpha \right \vert ^{2}\left( \cos 1+\cos 1\sinh 1+\sin
1\cosh 1-\cosh 1\right) }{\left( \left( A-B\right) \left( 1+\sinh \left(
1\right) \right) ^{3}+|B|\left \vert \alpha \right \vert \left( 1+m\cosh
\left( 1\right) +\sinh \left( 1\right) \right) \right) ^{2}}>0,
\end{equation*}%
$\allowbreak \allowbreak $which shows that $\phi \left( m\right) $ is an
increasing function and hence it will have its minimum value at $m=1$, thus
\begin{equation*}
\left \vert \frac{p\left( z_{0}\right) -1}{A-Bp\left( z_{0}\right) }\right
\vert \geq \frac{\left \vert \alpha \right \vert \left( 1+\cos \left(
1\right) -\sin \left( 1\right) \right) }{\left( A-B\right) \left( 1+\sinh
\left( 1\right) \right) ^{3}+|B|\left \vert \alpha \right \vert \left( 1+\cosh
\left( 1\right) +\sinh \left( 1\right) \right) }.
\end{equation*}%
Now by$\left( \ref{a4}\right) $\ we have%
\begin{equation*}
\left \vert \frac{p\left( z_{0}\right) -1}{A-Bp\left( z_{0}\right) }\right
\vert \geq 1,
\end{equation*}%
which contradicts $\left( \ref{j5}\right) ,$ therefore $\left \vert w\left(
z\right) \right \vert <1$ for $|z|<1$, which gives the desired result.
\end{proof}

\begin{corollary}
For $-1\leq B<A\leq 1,$ and $g \in \mathcal{A}_{p}$, then if
the condition%
\begin{equation*}
\left \vert \alpha \right \vert \geq \frac{\left( A-B\right) \left( 1+\sinh
\left( 1\right) \right) ^{3}}{1+\cos \left( 1\right) -\sin \left( 1\right)
-B\left( 1+\cosh \left( 1\right) +\sinh \left( 1\right) \right) }
\end{equation*}%
\  and
\begin{equation*}
1+\alpha \frac{g\left( z\right) ^{2}}{z^{2}\left( g^{\prime }\left( z\right)
\right) ^{2}}\left( 2+\frac{zg^{\prime \prime }\left( z\right) }{g^{\prime
}\left( z\right) }-\frac{zg^{\prime }\left( z\right) }{g\left( z\right) }%
\right) \prec \frac{1+Az}{1+Bz},
\end{equation*}%
holds, then $g\in \mathcal{G}_{sh}.$
\end{corollary}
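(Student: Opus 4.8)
The plan is to deduce this corollary from the preceding theorem by a change of function, exactly as in the corollaries that follow Theorems~\ref{R1} and \ref{t11}. First I would set
\begin{equation*}
l(z)=\frac{z^{2}g^{\prime}(z)}{g(z)},
\end{equation*}
which is the natural auxiliary function here because $l(z)/z=zg^{\prime}(z)/g(z)$, so that a subordination $l(z)/z\prec 1+\sinh z$ is precisely the statement that $g\in\mathcal{G}_{sh}$.

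Next I would compute the logarithmic derivative of $l$. From $\log l(z)=2\log z+\log g^{\prime}(z)-\log g(z)$ one obtains
\begin{equation*}
\frac{zl^{\prime}(z)}{l(z)}=2+\frac{zg^{\prime\prime}(z)}{g^{\prime}(z)}-\frac{zg^{\prime}(z)}{g(z)},
\end{equation*}
equivalently $l^{\prime}(z)=\frac{l(z)}{z}\left(2+\frac{zg^{\prime\prime}(z)}{g^{\prime}(z)}-\frac{zg^{\prime}(z)}{g(z)}\right)$. The key step is then to form the quantity $z^{3}l^{\prime}(z)/(l(z))^{3}$ that appears in the preceding theorem: inserting this formula for $l^{\prime}$ and using $z^{2}/(l(z))^{2}=g(z)^{2}/(z^{2}(g^{\prime}(z))^{2})$ yields
\begin{equation*}
\frac{z^{3}l^{\prime}(z)}{(l(z))^{3}}=\frac{g(z)^{2}}{z^{2}(g^{\prime}(z))^{2}}\left(2+\frac{zg^{\prime\prime}(z)}{g^{\prime}(z)}-\frac{zg^{\prime}(z)}{g(z)}\right).
\end{equation*}
Consequently the hypothesized subordination is nothing but $1+\alpha z^{3}l^{\prime}(z)/(l(z))^{3}\prec (1+Az)/(1+Bz)$, i.e.\ condition \eqref{j5} for the function $l$.

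Finally, since the displayed bound on $|\alpha|$ is identical to \eqref{a4}, I would apply the preceding theorem to $l$ in place of $f$ to conclude $l(z)/z\prec 1+\sinh z$. As $l(z)/z=zg^{\prime}(z)/g(z)$, this says $zg^{\prime}(z)/g(z)-1\prec\sinh z$, i.e.\ $g\in\mathcal{G}_{sh}$, as required.

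The only step demanding any care is the algebraic reduction of $z^{3}l^{\prime}/l^{3}$; the remainder is a direct invocation of the preceding theorem, so I do not anticipate a genuine obstacle. The one point to verify is that the cube $(1+\sinh 1)^{3}$ in the $|\alpha|$-bound matches the cubic power $z^{3}/(f(z))^{3}$ used in the theorem, which it indeed does.
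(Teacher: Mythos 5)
Your proposal is correct and is exactly the argument the paper intends: the paper omits an explicit proof of this corollary, but the substitution $l(z)=z^{2}g^{\prime}(z)/g(z)$ together with the identity $z^{3}l^{\prime}(z)/(l(z))^{3}=\frac{g(z)^{2}}{z^{2}(g^{\prime}(z))^{2}}\bigl(2+\frac{zg^{\prime\prime}(z)}{g^{\prime}(z)}-\frac{zg^{\prime}(z)}{g(z)}\bigr)$ is precisely the pattern used in the proof of the corollary following Theorem~\ref{R1}, adapted to condition \eqref{j5}. Your reduction and the final invocation of the preceding theorem are both accurate.
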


\begin{center}
\textbf{Funding}
\end{center}

Not applicable.

\begin{center}
\textbf{Availability of data and materials}
\end{center}

Not applicable.

\begin{center}
\textbf{Competing interests}
\end{center}

\noindent The authors declare that they have no competing interests.

\begin{center}
\textbf{Authors contributions}
\end{center}

\noindent All authors jointly worked on the results and they read and
approved the final manuscript.

\end{document}